\newtheorem{Theorem}{Theorem}[section]
\newtheorem{Definition}[Theorem]{Definition}
\newtheorem{Proposition}[Theorem]{Proposition}
\newtheorem{Lemma}[Theorem]{Lemma}
\newtheorem{Remark}[Theorem]{Remark}
\def\R{\mathbb R}
\def\N{\mathbb N}
\def\ds{\displaystyle}
\title{\bf Uniqueness for continuity equations in Hilbert spaces with weakly differentiable drift}
\author{
Giuseppe Da Prato\\\normalsize Scuola Normale Superiore Pisa
\and
Franco Flandoli\\
\normalsize University of Pisa
\and
Michael R\"ockner\\
\normalsize University of Bielefeld
}
\date{ }
\begin{document}
\maketitle

\begin{abstract} 
 We prove uniqueness for continuity equations in Hilbert spaces $H$. The corresponding drift $F$ is assumed to be in a first order Sobolev space with respect to some Gaussian measure. As in previous work on the subject, the proof is based  on commutator estimates which are infinite dimensional analogues to the classical ones due to DiPerna--Lions. Our general approach is, however, quite different since, instead of considering renormalized solutions, we prove a dense range condition implying uniqueness. In addition, compared to known results by Ambrosio--Figalli and Fang--Luo, we use a different approximation procedure, based on a more regularizing Ornstein--Uhlenbeck semigroup and consider Sobolev spaces of vector fields taking values in $H$ rather than the Cameron--Martin space of the Gaussian measure. This leads to different conditions  on the derivative of $F$, which are incompatible with previous work on the subject. Furthermore, we can drop the usual exponential integrability conditions on the Gaussian divergence of $F$, thus improving known uniqueness results in this respect. 
\end{abstract}
\bigskip

\noindent {\bf 2000 Mathematics Subject Classification AMS}: 35F05, 58D20, 60H07  

\noindent {\bf Key words}: Infinite dimensional transport equation, Stochastic calculus of variations, Ornstein Uhlenbeck processes.  \bigskip

 \section{Introduction}
 Let $H$ be a separable real Hilbert space with inner product
 $\langle \cdot, \cdot \rangle$ and norm $|\cdot|$. Let $F:[0,\infty)\times H\to H$ be Borel measurable. In this paper we want to give a new proof for uniqueness of solutions to the corresponding continuity equations  informally given as
 \begin{equation}
 \label{e1.1}
 \frac{d}{dt}\;\mu_t+\mbox{\rm div}\,(F(t,\cdot)\mu_t)=0,\quad \mu_0=\zeta,
 \end{equation} 
where $\zeta$ is a given initial datum in $\mathcal P(H)$, i.e. a probability measure on the Borel $\sigma$-algebra $\mathcal B(H)$ of $H$, and the solution $t\mapsto\mu_t$ a curve in $\mathcal P(H)$. The divergence in \eqref{e1.1} is meant in the sense of distributions, more precisely one uses the duality between $\mathcal P(H)$ and a space of test functions on $[0,\infty)\times H$ which we denote by $\mathcal D_T$ and which will be specified below.
Then one can give  \eqref{e1.1} a rigorous meaning by a weak formulation. More precisely, we fix an orthonormal basis  $\{e_n:\;n\in\N\}$  of $H$, $T>0,$ and set $H_T:=[0,T]\times H$. Then we define $\mathcal D_T$ to be the linear space of all functions $u:H_T\to \R$ such that there exists $N\in\N$ such that
$$
u(t,x)=u_N(t,\langle e_1,x \rangle,\ldots,\langle e_n,x \rangle) ,\quad x\in H,
$$
for some $u_N\in C^1_b([0,T]\times \R^N)$ such that $u_N(T)=0$. Then \eqref{e1.1} can be rigorously written as
\begin{equation}
 \label{e1.2}
 \int_0^T\int_H \mathcal K_F\,u(s,x)\,\mu_s(dx)\,ds=-\int_Hu(0,x)\,\zeta(dx),\quad \forall\;u\in \mathcal D_T, 
 \end{equation} 
where for $(t,x)\in H_T$, $\mathcal K_F$ is a (degenerate) Kolmogorov operator defined by
\begin{equation}
 \label{e1.3}
   \mathcal K_Fu(t,x)=\frac{\partial}{\partial t}\;u(t,x)+\langle F(t,x),Du(t,x) \rangle 
 \end{equation}  
  and $Du(t,x)\in H$ is defined  through
 $$
    \langle Du(t,x),y   \rangle=u'(t,x)(y),\quad y\in H,
  $$
  where $u'(t,x)(\cdot)$ means first Fr\'echet derivative  of $u(t,\cdot) $ with respect to $x\in H$. We note that $ \mathcal D_T$ depends on the chosen orthonormal basis. But this is irrelevant because what is important about the chosen test functions spaces in regard to uniqueness, is that \eqref{e1.2} makes sense and that it is as small as possible (to make the uniqueness result as strong as possible). A minimal requirement is that it should separate the points  of $H$, which obviously holds for $ \mathcal D_T$  defined above by the Hahn--Banach theorem, which in turn by a monotone class argument implies that  $ \mathcal D_T$ is dense in every $L^p(H_T, \mbox{\boldmath $\nu$})$, $p\in [1,\infty),$ for any finite (nonnegative) measure $\mbox{\boldmath $\nu$}$ in $H_T$.\medskip
  
  The main aim of this paper is to find conditions on $F$ such that \eqref{e1.2} has at most one solution  for a given initial  condition
  $\zeta\in\mathcal P(H)$.
  
  In contrast to the Fokker--Planck equation where $\mathcal K_F$ in \eqref{e1.2}-\eqref{e1.3} has a second order part (in $x$), and uniqueness is known even for just measurable $F$ (satisfying some integrability assumption), provided the second order part is non degenerate (cf. \cite{BDPR11} and  the preprint \cite{BDPRS}), for the continuity equation at least weak differentiability of $F$ (or being of bounded variation) is required to hope to have uniqueness of solutions, even in finite dimensions (see \cite{DPL}, \cite{Am}). However, in order to define weak differentiability of a function one needs a reference measure, and  if $H=\R^n$, e.g. the Lebesgue measure is a natural choice. If $H$ is infinite dimensional,   Lebesgue measure does not exist and we have to choose  a reference measure on $H$. There is really no canonical choice, but a ``good'' choice is to take a non degenerate centered Gaussian measure $\mu$ on $H$, because the concept 
of weak differentiability (with respect to such a $\mu$) has been extensively developed in the past in the framework of  the Malliavin calculus (\cite{Malliavin}, \cite{Nualart}, \cite{Bo98}). This choice of a reference measure  was proposed in \cite{AF09}  and they proved existence and uniqueness of solutions to \eqref{e1.2} under certain conditions on the weak derivative and exponential $\mu$-integrability conditions on its $\mu$-divergence. (see \cite[Theorem 3.1]{AF09},  see also \cite{FL10} for improvements of the results on the corresponding transport equation in \cite{AF09}.)

In this paper, also taking a Gaussian measure $\mu$ as a reference measure, we prove uniqueness  for \eqref{e1.2} by  a completely different method. On the other hand, our assumption on the weak derivative  of $F$ is different and, in fact incompatible with that in \cite{AF09}, since we use $H$  instead of the Cameron--Martin space as tangent space when defining Sobolev spaces (see  Remark \ref{r2.5} below). As a consequence, in contrast to \cite{AF09} we do not need to assume any exponential $\mu$-integrability conditions on the Gaussian divergence of  $F$. The idea of proof is inspired by the uniqueness proof for Fokker--Planck equations in Hilbert spaces from \cite{BDPR10}, \cite{BDPR11}. More precisely, we prove a suitable rank condition for the Kolmogorov operator in \eqref{e1.3}. But to implement this idea we have to regularize with  a much more smoothing Ornstein--Uhlenbeck semi-group than the one in \cite{AF09},\cite{FL10}  (see Section 2 below). Crucial is again the commutator estimate, which as turns out, can be proved  also for this regularization (see Section 3).

Let us remark that here we use the commutator estimate to prove a range
condition, opposite to the classical works where the commutator estimate is
used to prove renormalization of weak solutions. It is at this point that, in \cite{AF09} and  \cite{FL10}, exponential integrability is necessary; for our range
condition we do not need it. Concerning the problem  of proving a range
condition itself, this is usually done by means of gradient estimates on
solutions, which is a difficult problem; here we have the gradient estimate
for free, see \eqref{e2.6a}, because it holds for the $P_{\epsilon}$-regularized
solution.

Choosing  a  reference measure as in \cite{AF09}, \cite{FL10} we also have to restrict to a sub-class of solutions $\mu_t,\;t\in[0,\infty),$   to \eqref{e1.2}, namely those satisfying 
\begin{equation}
 \label{e1.4}
  \mu_t(dx)dt=\rho(t,x)\mu(dx)dt,
 \end{equation} 
  for some functions  $\rho\in L^p(H_T,dt\otimes \mu)$, $p>1,$  and prove uniqueness in this class.
  
  It is the subject of our further study to relax this condition \eqref{e1.4}, e.g. by considering more general reference measures than Gaussian measures. First steps in this direction have recently be done in \cite{KR12}, where the Gaussian measure $\mu$ is replaced by a measure $\nu$ which is differentiable in the sense of Fomin (see \cite{Bo98}). In particular, one can take certain Gibbs measures for $\nu$. However, the techniques in that paper are entirely different from our approach here.\bigskip

  We end this section recalling some results about the Ornstein--Uhlenbeck semigroup
  $P_t$ needed in what follows.  First we    choose and fix an arbitrary centered, non degenerate, Gaussian measure $\mu$ on $H.$ Let $Q$ be its covariance operator. So, $Q$ is symmetric, nonnegative definite with kernel $=\{0\}$ and Tr $Q<\infty$. We also use the notation $\mu=N_Q$. Then
  $P_t$ is, for $\varphi\in B_b(H)$, defined as
    \begin{equation}
 \label{e1.5}
P_t\varphi(x)=\int_H\varphi(y) N_{T_tx,Q_t}(dy),\quad x\in H,
 \end{equation} 
 where
  \begin{equation}
 \label{e1.6}
 T_t:=e^{-\frac{t}2 Q^{-1}},\quad Q_t=QS^2_t,\quad S_t:=(1-T_t^2)^{1/2}.
 \end{equation} 
   $N_{T_tx,Q_t}$ denotes the Gaussian measure on $H$ with covariance operator $Q_t$ and mean $T_tx$ and
 $B_b(H)$ is the space of all real and bounded Borel funcions on $H$.
We note for further use that
 \begin{equation}
 \label{e1.7}
  T^2_\epsilon+ S^2_\epsilon=1.
 \end{equation} 
 Consequently the matrix on $H\times H$  
\begin{equation}
 \label{e1.8}
\mathcal R:= \begin{pmatrix}
 T_\epsilon& S_\epsilon\\
 -S_\epsilon&T_\epsilon
 \end{pmatrix},
  \end{equation} 
 is orthogonal, so that $\mathcal R$ is invariant for the measure $\mu\times\mu$ on $H\times H$.
 
 Since $N_{T_tx,Q_t}<\!\!<N_Q$, we can write
    \begin{equation}
 \label{e1.9}
P_t\varphi(x)=\int_H\varphi(y)\rho(t,x,y)\mu(dy),
 \end{equation} 
where
  \begin{equation}
 \label{e1.10}
 \begin{array}{l}
\rho(t,x,y)\\
\\
=K(t)\exp\{-\tfrac12\langle Q_t^{-1} T_tx, T_tx\rangle+\langle Q_t^{-1}  T_tx,y\rangle-\tfrac12(\langle Q_t^{-1} T_ty, T_ty\rangle\},
\end{array}
 \end{equation} 
where $K(t)=[\det(1-T_t^2)]^{-1/2}$.

  We notice, for further use, the following identities.
    \begin{equation}
 \label{e1.11}
D_x\rho(t,x,y)=Q_t^{-1}T_t (y-T_tx)
 \end{equation} 
   \begin{equation}
 \label{e1.12}
D_y\rho(t,x,y)=Q_t^{-1}T_t (x-T_ty)
 \end{equation} 
We finally recall the Mehler formula
  \begin{equation}
 \label{e1.13}
P_t\varphi(x)=\int_H\varphi(T_tx+S_ty)\mu(dy).
 \end{equation}

 \section{The main result and scheme of the proof}
 
 \begin{Definition} 
 \label{d2.1}
 A family $(\mu_t)_{t\in[0,T]}$ is called  a solution of the (heuristic) continuity equation \eqref{e1.1} if $\mu_t\in\mathcal P(H)$ for every $t\in[0,T]$,   $t\mapsto \mu_t(A)$ is $\mathcal  B(H)$-measurable for all $A\in\mathcal  B(H)$,    $F\in L^1(H_T,\mu_tdt)$
 and \eqref{e1.2}  holds.
 \end{Definition}

As mentioned in the introduction we need a reference measure on $H$. So let $\mu=N_Q$ be the   centred, non degenerate, Gaussian measure on $H$ from the Introduction with covariance operator  $Q$.   Let $\{e_k:\;k\in\N\}$ be the eigenbasis of $Q$ and $\lambda_k\in(0,\infty)$ the corresponding  eigenvalues (i.e. $Qe_k=\lambda_ke_k,\;k\in\N$) numbered in decreasing order. Let the test function space $\mathcal D_T$ be defined as in the introduction with respect to this
 orthonormal basis $\{e_k:\;k\in\N\}$. 
 
 Define the following subclass $\mathcal M_ {F,\zeta,p}$ of solutions to \eqref{e1.1} for fixed initial condition $\zeta\in  \mathcal  P(H)$ and fixed $p\in [1,\infty]$. $\mathcal M_ {F,\zeta,p}$ is defined to be the set of all measures $ \mbox{\boldmath$\mu$}(dt,dx)=\mu_t(dx)dt$  such that  $(\mu_t)_{t\in[0,T]}$ is   a solution to \eqref{e1.1} in the sense of Definition \ref{d2.1} which satisfy
   \begin{equation}
 \label{e2.1}
 \mu_t(dx)dt=\rho(t,x)\mu(dx)dt,\quad\mbox{\rm for some}\; \rho\in L^p(H_T,dt\otimes \mu).
 \end{equation} 
 Clearly,  $\mathcal M_ {F,\zeta,p}$ is a convex set.

The following result is inspired by \cite{BDPR10},  \cite{BDPR11}
\begin{Proposition}
\label{p2.2}
Suppose the following rank condition holds:
$$
 \mathcal K_F (\mathcal D_T)\;\mbox{\it is dense in}\;L^{p'}(H_T,dt\otimes \mu), \eqno{(\mathcal R)}
$$
where $p\in [1,\infty]$ and $p'=\tfrac{p}{p-1}$. Then $\mathcal M_ {F,\zeta,p}$ contains at most one element.
\end{Proposition}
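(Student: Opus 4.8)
The plan is to reduce uniqueness to a duality (Hahn--Banach) argument, exploiting the linearity of the weak formulation \eqref{e1.2} in the solution. Suppose $\mbox{\boldmath$\mu$}^1,\mbox{\boldmath$\mu$}^2\in\mathcal M_{F,\zeta,p}$, with densities $\rho_1,\rho_2\in L^p(H_T,dt\otimes\mu)$ as in \eqref{e2.1}. Since both carry the same fixed initial datum $\zeta$, I would subtract the two instances of \eqref{e1.2}: the right-hand sides $-\int_H u(0,x)\,\zeta(dx)$ cancel, and setting $\sigma:=\rho_1-\rho_2\in L^p(H_T,dt\otimes\mu)$ one is left with
\begin{equation*}
\int_0^T\!\!\int_H \mathcal K_F u(s,x)\,\sigma(s,x)\,\mu(dx)\,ds=0,\qquad\forall\,u\in\mathcal D_T.
\end{equation*}
Thus $\sigma$ annihilates the whole range $\mathcal K_F(\mathcal D_T)$ in the pairing between $L^p$ and $L^{p'}$.

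Next I would phrase this annihilation functional-analytically. Note that the rank condition $(\mathcal R)$ already presupposes $\mathcal K_F(\mathcal D_T)\subseteq L^{p'}(H_T,dt\otimes\mu)$, so that the displayed integrals are finite by H\"older's inequality (here $dt\otimes\mu$ is a finite measure, since $\mu$ is a probability measure and $[0,T]$ is bounded). The element $\sigma\in L^p$ induces, via the canonical pairing $g\mapsto\int_0^T\!\int_H g\,\sigma\,\mu(dx)\,ds$, a bounded linear functional $\ell_\sigma$ on $L^{p'}(H_T,dt\otimes\mu)$, and the computation above says precisely that $\ell_\sigma$ vanishes on the subspace $\mathcal K_F(\mathcal D_T)$.

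Now I invoke the hypothesis: by $(\mathcal R)$ this subspace is dense in $L^{p'}$. Since $\ell_\sigma$ is continuous and vanishes on a dense set, it vanishes identically. Finally, the map $L^p\ni\sigma\mapsto\ell_\sigma$ into the dual of $L^{p'}$ is injective for every $p\in[1,\infty]$: for $1<p<\infty$ this is the standard $L^p$--$L^{p'}$ duality, for $p=\infty$ it is the identification $L^\infty=(L^1)^*$, and for $p=1$ it follows by testing $\ell_\sigma$ against $g=\operatorname{sgn}\sigma\in L^\infty$, which forces $\int|\sigma|=0$. Hence $\sigma=0$, i.e. $\rho_1=\rho_2$ $dt\otimes\mu$-a.e., which gives $\mbox{\boldmath$\mu$}^1=\mbox{\boldmath$\mu$}^2$.

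The argument for this Proposition is short, and the only genuine care is in the setup: one must ensure the duality pairing is legitimate (which is exactly why $\mathcal K_F(\mathcal D_T)\subseteq L^{p'}$ is built into $(\mathcal R)$) and treat the endpoint exponents $p\in\{1,\infty\}$ separately, since there the reflexivity used in the generic case is unavailable. I do not expect a real obstacle at this stage; the true difficulty of the paper is displaced into \emph{verifying} the rank condition $(\mathcal R)$ itself, which is where the more smoothing Ornstein--Uhlenbeck regularization and the commutator estimates will be required.
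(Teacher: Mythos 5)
Your proposal is correct and is essentially the paper's own proof: subtract the two instances of \eqref{e1.2} so that $\sigma=\rho^{(1)}-\rho^{(2)}\in L^p(H_T,dt\otimes\mu)$ annihilates $\mathcal K_F(\mathcal D_T)$, then use the density asserted in $(\mathcal R)$ together with the $L^p$--$L^{p'}$ duality pairing to conclude $\sigma=0$. The paper compresses this into three lines; your extra care with the endpoint exponents $p\in\{1,\infty\}$ and the injectivity of $\sigma\mapsto\ell_\sigma$ is just a fuller write-up of the same argument, not a different route.
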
 
\begin{proof}
Let $ \mu^{(i)}_t(dx)dt=\rho^{(i)}(t,x)\mu(dx)dt,$ $i=1,2$ be two elements in $\mathcal M_ {F,\zeta,p}$. Then by \eqref{e1.2}
$$
\int_0^T\int_H \mathcal K_F u(t,x)(\rho^{(2)}(t,x)-\rho^{(1)}(t,x))\mu(dx)dt=0,\quad\forall\;u\in \mathcal D_T.
$$
Hence $(\mathcal R)$ implies  $\rho^{(1)}=\rho^{(2)}$.

\end{proof}
Let us briefly recall the notion of (some) Sobolev spaces of functions on $H$ with respect to $\mu$.

Let $\mathcal F C_b^1$ (``finitely based $C_b^1$ functions'') denote the linear space of all functions   $\varphi:H\to \R$  such that for some $N\in\N$
$$
\varphi(x)=\varphi_N(\langle e_1,x \rangle,....,\langle e_N,x \rangle),\quad x\in H, 
$$
for some $\varphi_N\in C_b^1(\R^N)$. For $p\in[1,\infty)$ equip
$\mathcal F C_b^1$ with the norm
$$
\|\varphi\|_{1,p}:= \left(\int_H(|D\varphi(x)|^p+ |\varphi(x)|^p)\mu(dx)\right)^{1/p},
$$
where $D\varphi(x)$ is the unique element in $H$ such that
\begin{equation}
\label{e2.1'}
\langle D\varphi(x),y \rangle_H=\varphi'(x)(y)=\frac{\partial\varphi}{\partial y}(x), \quad y\in H, 
\end{equation}
where $\frac{\partial\varphi}{\partial y}$ means partial derivative in the direction $y$ and  $\varphi'$ denotes the Fr\'echet derivative of $\varphi$.
Then it is well-known (see e.g. \cite{DP11}) that  $\|\varphi\|_{1,p}$ is closable over $L^p(H,\mu)$ so that
$$
W^{1,p}(H,\mu)=\overline{\mathcal F C_b^1}^{\|\cdot\|_{1,p}} \;(=\mbox{\rm completion of $\mathcal F C_b^1$ with respect to $\|\cdot\|_{1,p}$})
$$
is a subspace of  $L^p(H,\mu)$. Likewise as this Sobolev space
of functions one   defines Sobolev spaces of vector fields $F:H\to H$ and even of time dependent vector fields $F:H_T\to H$ as follows: let $\mathcal V\!\mathcal F C_{b,T}^1$ (``finitely based $C_b^1$ vector fields'') denote the linear space of all maps $F:H_T\to H$  such that for some $N\in\N$
$$
F(t,x)=\sum_{i=1}^N g_i(t,x)e_i,\quad (t,x)\in H_T,
$$
for some $g_i:H_T\to \R$ of type
$$
g_i(t,x)=g_{i,N}(t, \langle e_1,x \rangle,....,\langle e_N,x \rangle),\quad x\in H, 
$$
with $g_{i,N}\in C_{b,T}^1([0,T]\times \R^N)$. For $p\in[1,\infty)$ we equip
$\mathcal V\!\mathcal F C_{b,T}^1$ with the norm
$$
\|F\|_{1,p,T}:=\left(\int_0^T  \int_H(\|DF(t,x)\|_{L_2(H,H)}^p+|F(t,x)|_{H}^p)\mu(dx)dt\right)^{1/p} 
$$
where $L_2(H,H)$ denotes the linear space of all Hilbert--Schmidt
operators on $H$ with corresponding norm $\|\cdot\|_{L_2(H,H)}$ and
$$
DF(t,x):=\sum_{i=1}^N \langle Dg_i(t,x),\cdot\rangle e_i\in L_2(H,H).
$$
Again, it is well known that this norm is closable in $L^p(H_T;L_2(H,H),\mu))$. Hence we can define the Sobolev space of time dependent vector fields by
$$
L^p([0,T];W^{1,p}(H;H,\mu))=\mbox{\rm completion of}\;\mathcal V\!\mathcal F C_{b,T}^1\;\mbox{\rm with respect to}\; \|\cdot\|_{1,p,T},
$$
 which by  closability is a subspace in $L^p(H_T;H,\mu)$.\medskip

Now we can formulate our main result. 
\begin{Theorem}
\label{t2.3}
Let $p\in(2,\infty)$ and suppose that, for some $s>p'=\frac{p}{p-1}$, we have   $F\in L^{s}([0,T];W^{1,s}(H;H,\mu))$ and that, in addition,
\begin{equation}
 \label{e2.2}
 F(H_T)\subset Q^{1/2}(H),\;\mbox{\it and}\;\int_0^T\int_H|Q^{-1/2}F(t,x)|^{s}\mu(dx)dt<\infty.
 \end{equation} 
Then the rank condition $(\mathcal R)$ holds, hence by Proposition \ref{p2.2}  $\mathcal  M_{ F,\zeta, p}$ contains at most one element.
\end{Theorem}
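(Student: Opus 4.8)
The target is the rank condition $(\mathcal R)$; once it is established, Proposition \ref{p2.2} yields the conclusion. Since the excerpt already records that $\mathcal D_T$ is dense in every $L^{p'}(H_T,dt\otimes\mu)$, what must really be shown is that applying $\mathcal K_F$ does not destroy this density, i.e. that every $g$ in a convenient dense subclass of $L^{p'}$ lies in the $L^{p'}$-closure of $\mathcal K_F(\mathcal D_T)$. The plan is to produce such approximants by \emph{solving} the backward first order equation $\mathcal K_F u=g$, $u(T)=0$. Because $F$ is only weakly differentiable this cannot be done directly, so I would first replace $F$ by the regularised field $F_\epsilon:=P_\epsilon F$ (the semigroup acting componentwise and in $x$ only, for each fixed $t$), solve with $F_\epsilon$, and then let $\epsilon\to0$.

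The choice of the strongly smoothing Mehler semigroup \eqref{e1.13} is essential here. Because $Q$, and hence $T_\epsilon$ and $Q_\epsilon$, are diagonal in $\{e_k\}$, the operator $P_\epsilon$ maps $\mathcal D_T$ into $\mathcal D_T$, and for smooth $F_\epsilon$ the regularised equation $\mathcal K_{F_\epsilon}u_\epsilon=g$, $u_\epsilon(T)=0$, is solvable with a solution carrying the gradient estimate \eqref{e2.6a} essentially for free. Differentiating $P_\epsilon$ via the Mehler formula and the identities \eqref{e1.11}--\eqref{e1.12} produces weights of the form $Q_\epsilon^{-1/2}T_\epsilon$; it is precisely to absorb these $Q^{-1/2}$-type factors when they fall on $F$ that hypothesis \eqref{e2.2}, namely $F(H_T)\subset Q^{1/2}(H)$ with $Q^{-1/2}F\in L^s$, is imposed. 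After a routine cylindrical approximation I would take $u_\epsilon\in\mathcal D_T$ with $\mathcal K_{F_\epsilon}u_\epsilon$ as close to $g$ in $L^{p'}$ as desired.

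The decisive point is the defect $\mathcal K_F u_\epsilon-g=\mathcal K_F u_\epsilon-\mathcal K_{F_\epsilon}u_\epsilon=\langle F-F_\epsilon,\,Du_\epsilon\rangle$. Since $F_\epsilon\to F$ only in $L^s([0,T];W^{1,s}(H;H,\mu))$, the operator norms of $DF_\epsilon$ need not stay bounded, so $Du_\epsilon$ may explode and a crude H\"older bound fails. What saves the argument is the exact analogue of the DiPerna--Lions cancellation: the commutator estimate of Section 3, in which $\langle F,DP_\epsilon(\cdot)\rangle-P_\epsilon\langle F,D(\cdot)\rangle$ is shown to tend to $0$ in $L^{p'}$ using $F\in W^{1,s}$, the gradient estimate \eqref{e2.6a}, and \eqref{e2.2}. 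Matching exponents is where the standing assumptions are consumed: one needs $\tfrac1{p'}=\tfrac1s+\tfrac1q$ with the gradient bound living in $L^q$, so $s>p'$ forces $q<\infty$, while $p>2$ (equivalently $p'<2$) keeps the remaining exponents in the $L^2$-based gradient and commutator estimates admissible. Letting $\epsilon\to0$ then gives $\|\mathcal K_F u_\epsilon-g\|_{L^{p'}}\to0$, so $g\in\overline{\mathcal K_F(\mathcal D_T)}$ and $(\mathcal R)$ follows.

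I expect the only genuine obstacle to be the commutator estimate together with its companion gradient bound. All the functional-analytic packaging --- the reduction to approximating a dense class, the fact that $P_\epsilon$ preserves $\mathcal D_T$, the cylindrical approximation --- is soft. The substance is that, although the super-regularising $P_\epsilon$ makes $DF_\epsilon$ (and hence $Du_\epsilon$) blow up as $\epsilon\to0$, the commutator nonetheless vanishes by cancellation, and that this cancellation survives \emph{only} because $F$ is controlled in the $H$-valued Sobolev norm and along the $Q^{1/2}$-directions through \eqref{e2.2}. Making this quantitative in the infinite-dimensional Gaussian setting, uniformly in $\epsilon$, is the heart of the matter and is exactly what Section 3 must accomplish.
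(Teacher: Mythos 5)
Your overall strategy (prove $(\mathcal R)$ by approximately solving the backward equation, with the commutator estimate as the engine) has the right flavor, but the mechanism you propose for producing and cancelling the error is not the paper's, and it does not work. You regularize the \emph{drift}, setting $F_\epsilon:=P_\epsilon F$, solve $\mathcal K_{F_\epsilon}u_\epsilon=g$, and identify the defect as $\langle F-F_\epsilon,Du_\epsilon\rangle$. That defect is not a commutator: the commutator \eqref{e2.4} is $\langle F,DP_\epsilon u\rangle-P_\epsilon\langle F,Du\rangle$, and it arises only when $P_\epsilon$ is applied to the \emph{solution} (equivalently, to the equation), never when the drift itself is mollified. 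Consequently Proposition \ref{p2.4} says nothing about your defect term; to control $\langle F-F_\epsilon,Du_\epsilon\rangle$ you would need a bound on $Du_\epsilon$, and none is available: a solution built from the characteristics of the drift $P_\epsilon F$ has $\|Du_\epsilon\|_\infty$ of order $\exp\bigl(T\,\|DP_\epsilon F\|_\infty\bigr)$, which blows up as $\epsilon\to0$ (indeed $P_\epsilon F$ need not even be Lipschitz when $F$ is merely Sobolev, so the characteristics themselves are in doubt, and $u_\epsilon$ is in any case not cylindrical, hence not in $\mathcal D_T$). You also misattribute \eqref{e2.6a}: it is the smoothing bound $\|DP_\epsilon v\|_\infty\le C\epsilon^{-1/2}\|v\|_\infty$ for a bounded function $v$, i.e. it holds for the $P_\epsilon$-regularized solution, not for solutions of the transport equation with regularized drift; this is exactly why the paper regularizes the solution and not the vector field.

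The paper keeps the two approximations separate and applies them in the right places. $F$ is approximated by \emph{cylindrical smooth} fields $F_n$ (Lemma \ref{l4.1a}); the backward equation \eqref{e2.5} with drift $F_n$ is solved by finite-dimensional characteristics, giving $u_n\in\mathcal D_T$ with $\|u_n\|_\infty\le\|f\|_\infty$ by the maximum principle. Then $P_\epsilon$ is applied to that equation, producing the identity \eqref{e2.6}, whose left-hand side is $\mathcal K_F(P_\epsilon u_n)\in\mathcal K_F(\mathcal D_T)$ and whose right-hand side is $P_\epsilon f$ plus two errors: the term $\langle F-F_n,DP_\epsilon u_n\rangle$, killed as $n\to\infty$ at fixed $\epsilon$ by \eqref{e2.6a} together with $F_n\to F$ in $L^{p'}$, and the genuine commutator $B_\epsilon(u_n,F_n)$, which converges weakly to $B_\epsilon(u,F)$ as $n\to\infty$ (equicontinuity from Proposition \ref{p2.4}(i) plus weak compactness of $(u_n)$ in $L^r$) and then to $0$ as $\epsilon\to0$ by Proposition \ref{p2.4}(ii). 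Note the order of limits (first $n\to\infty$, then $\epsilon\to0$) and that this yields only \emph{weak} $L^{p'}$-convergence of the right-hand side to $f$; the paper then uses convexity of $\mathcal K_F(\mathcal D_T)$ to upgrade weak density to strong density. Your claimed strong convergence $\|\mathcal K_F u_\epsilon-g\|_{L^{p'}}\to0$ bypasses this point without justification, and with a single parameter $\epsilon$ the double-limit structure that makes the argument close cannot even be formulated.
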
 \medskip

 The rest of this section is devoted to reduce  the proof of  $(\mathcal R)$ and hence of Theorem \ref{t2.3} to Proposition \ref{p2.4} below, which is a commutator estimate for a suitable  regularization through the Mehler type semigroup $P_t,\;t\ge 0,$ of integral operators on $B_b(H)$ defined in \eqref{e1.5} (see also  \eqref{e1.13})
  Let us define the commutator for  $u\in \mathcal D_T$, $F\in  \mathcal  V\mathcal F C^1_{b,T},$ $(t,x)\in H_T$
 \begin{equation}
 \label{e2.4}
 B_\epsilon(u,F)(t,x):=\langle F(t,x),DP_\epsilon(u(t,\cdot))(x) \rangle-P_\epsilon(\langle F(t,\cdot),Du(t,\cdot) \rangle)(x).
 \end{equation} 
 \begin{Proposition}
\label{p2.4}
Let $p\in(2,\infty)$ and $r\in[1,\infty)$, $s\in(1,2]$ such that $\frac1{p'}=\frac1r+\frac1{s}.$
Then:
\begin{enumerate}
\item[(i)]   There exists $C\in(0,\infty)$ such that
$$
\begin{array}{l}
\ds\left(\int_0^T\int_H|B_\epsilon(u,F)|^{p'}d\mu\,dt   \right)^{1/p'}\\
\\
\ds\le C\|u\|_{L^r(H_T,dt\otimes \mu)}\;\,\left( \|F\|_{1,s,T}+\|Q^{-1/2}F\|_{L^s(H_T,dt\otimes\mu)}   \right),
\end{array}
$$
for all $ u\in\mathcal D_T,\;F\in  \mathcal V\mathcal FC^1_{b,T}.$ In particular, $B_\epsilon$ extends to a continuous bilinear map (denoted by the same symbol)
$$
\begin{array}{l}
B_\epsilon: L^r(H_T,dt\otimes \mu)\times L^s([0,T];W^{1,s}(H;H,\mu)\cap L^s(H; Q^{1/2}H;\mu))\\
\\
\to L^{p'}(H_T,dt\otimes \mu).
\end{array}
$$

\item[(ii)] $B_\epsilon(u,F)\to 0$ in  $L^{p'}(H_T,dt\otimes \mu)$ as
$\epsilon\to 0$, for all 
$$(u,F)\in L^r(H_T,dt\otimes \mu)\times L^s([0,T];W^{1,s}(H;H,\mu)\cap L^s(H;Q^{1/2}H,\mu)).$$

\end{enumerate}
\end{Proposition}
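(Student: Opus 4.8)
\emph{Reduction.} Since $B_\epsilon$ is bilinear, the plan is to prove the quantitative bound in (i) only for smooth finitely based data $u\in\mathcal D_T$ and $F\in\mathcal V\mathcal F C^1_{b,T}$; the asserted continuous extension and the convergence (ii) then follow by density together with a $3\epsilon$-argument that exploits the uniformity in $\epsilon$ of the constant $C$. Fix $t$ and suppress it. Starting from the Mehler formula \eqref{e1.13}, differentiation under the integral gives $DP_\epsilon u(x)=T_\epsilon\int_H Du(T_\epsilon x+S_\epsilon z)\,\mu(dz)$, and using the self-adjointness of $T_\epsilon$ one first rewrites the commutator in the clean form
\[
B_\epsilon(u,F)(x)=\int_H\big\langle\, T_\epsilon F(x)-F(T_\epsilon x+S_\epsilon z),\ Du(T_\epsilon x+S_\epsilon z)\,\big\rangle\,\mu(dz),
\]
which is exactly the infinite-dimensional analogue of the DiPerna--Lions commutator: an increment of $F$ tested against $Du$.

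\emph{Removing the derivative of $u$.} Write $y:=T_\epsilon x+S_\epsilon z$ and use $Du(y)=S_\epsilon^{-1}D_z\big(u(y)\big)$ to integrate by parts in $z$ against $\mu$. With the Gaussian divergence $\delta(G)=\langle Q^{-1}z,G\rangle-\mathrm{trace}(DG)$ applied to the $z$-field $G(z)=T_\epsilon F(x)-F(y)$ this gives
\[
B_\epsilon(u,F)(x)=S_\epsilon^{-1}\!\int_H u(y)\,\Big[\big\langle Q^{-1}z,\,T_\epsilon F(x)-F(y)\big\rangle+S_\epsilon\,\mathrm{trace}(DF)(y)\Big]\,\mu(dz),
\]
so that now only $u$ itself (not $Du$) appears, which is what will produce the factor $\|u\|_{L^r}$. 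Both bracketed terms carry the singular prefactor $S_\epsilon^{-1}$ and diverge separately as $\epsilon\to0$; the heart of the matter is their exact cancellation. To exhibit it \emph{while keeping $F$ and $DF$ evaluated only where the underlying law is $\mu$}, I represent the increment by the Ornstein--Uhlenbeck interpolation $h(s):=T_{\epsilon-s}F(T_sx+S_sz)$, $s\in[0,\epsilon]$, for which $h(0)=T_\epsilon F(x)$ and $h(\epsilon)=F(y)$, whence $T_\epsilon F(x)-F(y)=-\int_0^\epsilon h'(s)\,ds$. The point of this device is that, by \eqref{e1.7}, $\xi_s:=T_sx+S_sz$ has law $\mu$ for every $s$, unlike the naive straight segment $x+\sigma(y-x)$, whose interior points carry a law singular with respect to $\mu$ and would render the Gaussian--Sobolev norms of $F$ meaningless.

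\emph{The cancellation and the estimate.} Differentiating, $h'(s)=\tfrac12 Q^{-1}T_{\epsilon-s}F(\xi_s)+T_{\epsilon-s}\,DF(\xi_s)\big(\dot{T}_s x+\dot{S}_s z\big)$. After pairing with $Q^{-1}z$ and multiplying by $S_\epsilon^{-1}$, the first summand is dominated by $|Q^{-1/2}F(\xi_s)|$, which is exactly where hypothesis \eqref{e2.2} enters; the second summand, being linear in $DF(\xi_s)$, yields a quadratic form in $z$ whose $z$-expectation reproduces, up to a factor tending to $0$, the trace term $\mathrm{trace}(DF)(y)$ and cancels it. What survives is a centred second--Wiener--chaos expression whose $L^2(\mu(dz))$-norm is controlled by $\|DF(\xi_s)\|_{L_2(H,H)}$ and by bounded operator norms of $T_{\epsilon-s}$ and $S_s^{-1}Q^{1/2}$, \emph{not} by the trace; this matching of the chaos structure with the Hilbert--Schmidt norm is precisely why the Sobolev space is built over $H$ rather than over the Cameron--Martin space. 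Once the cancellation is performed, $|B_\epsilon(u,F)(x)|$ is bounded by an $s$- and $z$-integral of $|u|$ at $\xi_s$ (or $y$) times an $F$-factor ($\|DF\|_{L_2(H,H)}$ or $|Q^{-1/2}F|$ at $\xi_s$) times a Gaussian polynomial in $z$ of degree $\le2$ whose moments are bounded uniformly in $\epsilon$ after using $\int_0^\epsilon\dot{S}_s\,ds<\infty$. Applying H\"older with the three exponents $(r,s,p)$ — admissible precisely because $\tfrac1r+\tfrac1s+\tfrac1p=\tfrac1{p'}+\tfrac1p=1$ — places $u$ in $L^r$, the $F$-factor in $L^s$, and the Gaussian polynomial in $L^p$; since $\mu$ is invariant for $P_\epsilon$, the semigroup is an $L^q(\mu)$-contraction, so the inner integrals collapse to $\|u\|_{L^r}$ and to $\|F\|_{1,s,T}+\|Q^{-1/2}F\|_{L^s}$, the Gaussian factor supplying the finite constant $C$. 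Integrating in $t$ and using H\"older in $t$ with the same exponents gives (i).

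\emph{Convergence and the main obstacle.} For (ii) it suffices, by (i) and density, to treat smooth $u,F$: there $DP_\epsilon u\to Du$ and $P_\epsilon(\langle F,Du\rangle)\to\langle F,Du\rangle$ by strong continuity of the Ornstein--Uhlenbeck semigroup, so $B_\epsilon(u,F)\to0$ pointwise and, dominated via the bound of (i), also in $L^{p'}(H_T,dt\otimes\mu)$; the general case follows by the $3\epsilon$-argument. I expect the main obstacle to be the cancellation step: taming the singular prefactor $S_\epsilon^{-1}$ uniformly in $\epsilon$ by extracting the exact cancellation of the two divergence-type terms on the measure-preserving locus $\{\xi_s\}$, so that the surviving quantity is dominated \emph{solely} by $\|DF\|_{L_2(H,H)}$ and $|Q^{-1/2}F|$ and never by the uncontrolled trace of $DF$.
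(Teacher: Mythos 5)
Your skeleton is in fact the paper's own: the Mehler formula plus Gaussian integration by parts in $z$ turns the commutator into $\int u(y)\,[\,\cdot\,]\,\mu(dz)$ with only $u$ (not $Du$) inside, and after fixing the placement of the operator $S_\epsilon^{-1}$ and regrouping (use $z=S_\epsilon^{-1}(y-T_\epsilon x)$), your bracket is exactly $\mathrm{div}_Q F(y)-[\,g_\epsilon(t,y,\eta)-g_\epsilon(t,x,z)\,]$, i.e.\ the representation of Proposition \ref{p3.1}; the measure-preserving rotation, the interpolation along it, the H\"older step with exponents $(r,s,p)$, and the density/equicontinuity argument for (ii) are likewise the paper's. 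The genuine gap is at the step you yourself single out as the main obstacle: the ``exact cancellation'' between the trace term and the $z$-expectation of the quadratic term does not take place, and what survives is precisely the uncontrolled trace you promise never to meet.

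Concretely, freeze $A=DF(\xi_s)$ and compute the $z$-expectation of your second summand using $\mathbb{E}\langle Q^{-1}z,Bz\rangle=\mathrm{Tr}\,B$: integrating $\tfrac12\,Q^{-1}\tfrac{T_s^2}{S_s}T_{\epsilon-s}$ over $s\in[0,\epsilon]$ gives (spectrally) $T_\epsilon\arcsin(S_\epsilon)$, so the expectation equals $-\mathrm{Tr}\,[A\,K_\epsilon]$ with $K_\epsilon=\tfrac{T_\epsilon}{S_\epsilon}\arcsin(S_\epsilon)$, and moreover $A$ sits at $\xi_s$, not at $y$. On the small eigenvalues of $Q$ one has $T_\epsilon\approx 0$, hence $K_\epsilon\approx 0$ there for \emph{every} fixed $\epsilon$: on the high modes there is no cancellation at all, and the residue $\mathrm{Tr}\,[DF(y)(1-K_\epsilon)]$ is a trace of $DF$ against a bounded operator which is not small. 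Such a quantity cannot be bounded by $\|DF\|_{L_2(H,H)}$ (trace pairings require trace-class, not Hilbert--Schmidt, control), so your final estimate does not close; the ``factor tending to $0$'' exists mode by mode but not uniformly, which is exactly the infinite-dimensional difficulty. The paper never attempts this cancellation: every trace it produces is completed to a full Gaussian divergence $\mathrm{div}_Q G=\mathrm{Tr}\,[DG]-\langle Q^{-1}\cdot\,,G\rangle$ and estimated by the Meyer-type inequality of Lemma \ref{l5.4} (this is where $\|DG\|_{L_2(H,H)}$ and $|Q^{-1/2}G|$ enter), while the genuinely quadratic parts are centred by the trace generated by the interpolation itself and estimated by the exact moment identity of Proposition \ref{p4.3a}. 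Two further quantitative inputs are missing from your outline: the operator bound \eqref{e3.19}, $\|Q^{-1}\tfrac{T_\epsilon}{S_\epsilon}\tfrac{T_{\epsilon\xi}}{S_{\epsilon\xi}}\|\le C/(\epsilon\,\xi^{1/2})$, which is what lets the factor $\epsilon$ from the interpolation kill the singularity with an integrable $\xi^{-1/2}$ left over; and the observation that this only works if the pairing vector is rotated along with $x$ (as in the paper's $g_\epsilon(t,x_\xi,y_\xi)$), whereas in your parametrization --- fixed vector $Q^{-1}z$ paired with $h'(s)$ --- the divergence-completion of the $\xi_s$-dependent terms produces operators like $S_sS_\epsilon^{-1}Q^{-1}T_{\epsilon-s}$ whose norms blow up like $(\epsilon-s)^{-1}$ near $s=\epsilon$ and are not integrable, so the bookkeeping fails even before the trace issue.
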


The proof of Proposition \ref{p2.4}-(i) is carried out in Section 3 below. Assertion (ii) obviously holds for all $u\in \mathcal D_T$, $F\in\mathcal V\mathcal FC^1_{b,T}$. But by (i), $B_\epsilon,$ $\epsilon\in[0,1],$ are equicontinuous on
$$
L^r(H_T,dt\otimes \mu)\times L^s([0,T];W^{1,s}(H;H,\mu)\cap L^s(H;Q^{1/2}H,\mu)),
$$
which contains $\mathcal D_T\times \mathcal V\mathcal FC^1_{b,T}$ as a dense set. Hence (ii) follows.

Let us now show that Proposition \ref{p2.4} implies Theorem \ref{t2.3}.\medskip

{\bf Claim}. Proposition \ref{p2.4} implies $(\mathcal R)$.

\begin{proof}
Let $f\in \mathcal D_T$ and $r,s\in[1,\infty)$ be as in Proposition \ref{p2.4}  such that $s\in(p',2)$. By definition of $L^s([0,T];W^{1,s}(H;H,\mu))$ there  exists $F_n\in \mathcal V\!\mathcal F C_{b,T}^1$, $n\in\N$, converging to to $F$ w.r.t.  $\|\cdot\|_{1,s,T}$ and in the sense of Lemma \ref{l4.1a} of Appendix A. Since $F_n$ is smooth and finitely based, there exists a solution $u_n\in \mathcal D_T$ of
\begin{equation}
 \label{e2.5}
 \left\{\begin{array}{l}
 \ds\frac{\partial u_n}{\partial t}+\langle F_n,Du_n \rangle =f,\\
 \\
 u_n(T,\cdot)=0
 \end{array}\right.
 \end{equation} 
 We namely set
 $$
 u_n(t,x)=\int_t^T f(s-t,\xi_n(s,t,x))ds
 $$
 where the characteristics $\xi_n(s,t,x)$ are, as well known,  the solution to
 $$
 \frac{\partial }{\partial s}\xi_n(s,t,x)=F_n(s,\xi_n(s,t,x)),\quad \xi_n(t,t,x)=x.
 $$
 
 Applying $P_\epsilon$ for $\epsilon>0$ to \eqref{e2.5} we obtain
 \begin{equation}
 \label{e2.6}
 \frac{\partial P_\epsilon u_n}{\partial t}+\langle F,DP_\epsilon u_n \rangle =P_\epsilon f+\langle F-F_n,DP_\epsilon u_n \rangle+B_\epsilon(F_n,u_n),
 \end{equation} 
 note that $P_\epsilon u_n\in \mathcal D_T$.
 
By the maximum  principle we have
$$
\|u_n\|_\infty\le \|f\|_\infty,\quad\forall\;n\in \N,
$$
and by well known smoothing properties of $P_\epsilon$ (see e. g. \cite{DP04}) we know that for some $C\in(0,\infty)$
 \begin{equation}
 \label{e2.6a}
\|DP_\epsilon u_n\|_\infty\le C\epsilon^{-1/2}\|u_n\|_\infty\le C\epsilon^{-1/2}\|f\|_\infty,\quad\forall\;n\in \N.
 \end{equation}
Hence, passing to a subsequence if necessay, we may assume that $u_n\to u$ in $L^r(H_T,dt\otimes \mu)$ weakly. But for every $v\in L^p(H_T,dt\otimes \mu)$ by Proposition \ref{p2.4}  
$$
\begin{array}{l}
\ds\left|\int_0^T\int_Hv(B_\epsilon(u_n,F_n) -B_\epsilon(u,F))d\mu\,dt \right|\\
\\
\ds=\left|\int_0^T\int_HvB_\epsilon(u_n,F_n-F)  d\mu\,dt+\int_0^T\int_HvB_\epsilon(u_n-u,F) d\mu\,dt  \right|\\
\\
\ds\le C\|v\|_{L^p(H_T,dt\otimes \mu)}\;\|u\|_{L^r(H_T,dt\otimes \mu)}\\
\\
\ds\times  \left[\|F_n-F\|_{1,s,T}+\|Q^{-1/2}(F_n-F)\|_{L^s(H_T,dt\otimes \mu)}\right]\\
\\
\ds+ \int_0^T\int_HB_\epsilon(\cdot,F)^* v\,(u_n-u)\, d\mu\,dt    \\
\\
\to 0\quad\mbox{as}\;n\to\infty,
\end{array} 
$$where $B_\epsilon(\cdot,F)^*\in L(L^p(H_T,dt\otimes \mu),L^{r'}(H_T,dt\otimes \mu)),$ $r'=\frac{r}{r-1},$ is the adjoint of the linear bounded operator in $L(L^r(H_T,dt\otimes \mu),L^{p'}(H_T,dt\otimes \mu))$ given by
$$
u\mapsto B_\epsilon(u,F).
$$
Here we have used Lemma \ref{l4.1a}.
Hence
$$
B_\epsilon(u_n,F_n)\to B_\epsilon(u,F)\quad\mbox{weakly in}\;L^{p'}(H_T,dt\otimes \mu)
$$
By Proposition \ref{p2.4}(ii), $B_\epsilon(u,F)\to 0$ in $L^{p'}(H_T,dt\otimes \mu)$  as $\epsilon\to 0$, hence also  with respect to the weak topology on $L^{p'}(H_T,dt\otimes \mu)$. Since the latter is metrizable on norm balls in $L^{p'}(H_T,dt\otimes \mu)$ and since $s\ge p'$, we see
that the right hand side of \eqref{e2.6}, weakly converges to $f$  in $L^{p'}(H_T,dt\otimes \mu)$ when we let first $n\to\infty$ and then  $\epsilon\to 0$. But obviously the left hand side of \eqref{e2.6} is in $\mathcal K_F(\mathcal D_T)$.  Therefore, we obtain that  $\mathcal K_F(\mathcal D_T)$ is weakly dense in $L^{p'}(H_T,dt\otimes \mu)$, since it cointains $\mathcal D_T$ as a dense subset. Hence $(\mathcal R)$ follows, since $\mathcal K_F(\mathcal D_T)$ is convex (even linear).
\end{proof}
\begin{Remark}
\label{r2.5}
\em

Let us compare our main result Theorem \ref{t2.3} with the corresponding result about uniqueness in \cite{AF09} (i.e. the uniqueness part of \cite[Theorem 3.1]{AF09}.)

We shall in fact see that they are incompatible. First of all, since we work on a separable Hilbert space $H$ and the authors of the above paper work on a  separable Banach space $E$, to compare we have to assume that $E$ is a separable Hilbert space. They consider also another Hilbert space  which is contained in $E=H$  and which can easily   be seen to be identical to $Q^{1/2}H=:\mathcal H$ with norm $|\cdot|_{\mathcal H}=|Q^{-1/2}\cdot|_H$. $\mathcal H$ is considered in \cite{AF09} as a tangent space at every point in $H$, while in our framework the tangent space to $H$ is $H$ itself.

 While condition \eqref{e2.2} is also assumed in \cite{AF09},  instead of our condition
\begin{equation}
\label{e2.7}
F\in L^s([0,T];W^{1,s}(H;H,\mu)),
\end{equation}
the authors assume that
\begin{equation}
\label{e2.8}
F\in L^s([0,T];W^{1,s}(H;\mathcal H,\mu)),
\end{equation}
which in turn is defined to be the completion of $\mathcal V\mathcal FC^1_{b,T}$ with respect to the norm
$$
\left(\int_0^T\int_H\left(\|D_\mathcal HF(t,x)\|^s_{L_2(\mathcal H,\mathcal H)}+|F(t,x)|^s_{\mathcal H}   \right)\mu(dx)\,dt     \right)^{1/s},  
$$
where $L_2(\mathcal H,\mathcal H)$ is the space of Hilbert--Schmidt operators from $\mathcal H$ to $\mathcal H$ and analogously to \eqref{e2.1'} for $\varphi\in \mathcal FC_b^1,\;x\in H,$ $D_\mathcal H\varphi(x)$ is the unique element in $\mathcal H$ such that
\begin{equation}
\label{e2.9}
 \langle D_\mathcal H\varphi(x),y   \rangle_\mathcal H=\varphi'(x)(y)=\frac{\partial\varphi}{\partial y}(x),\quad y\in \mathcal H.
\end{equation}
Correspondingly, for $F=\sum_{i=1}^Ng_i\;e_i\in \mathcal V\mathcal FC^1_{b,T}$, $(t,x)\in H_T$
$$
D_\mathcal HF(t,x):=\sum_{i=1}^N\langle D_\mathcal H g_i(t,x),\cdot\rangle_\mathcal H\;e_i\;(\in L_2(\mathcal H,\mathcal H)).
$$
Note that clearly $\widetilde{e_j}:=\lambda_j^{1/2}e_j,\;j\in\N,$ is an orthonormal basis in $\mathcal H$, hence
$$
\begin{array}{l}
\ds \|D_\mathcal H F(t,x) \|^2_{L_2(\mathcal H,\mathcal H)}=\sum_{j=1}^\infty|D_\mathcal H F(t,x)(\widetilde{e_j})|^2_\mathcal H\\
\\
\ds=\sum_{j=1}^\infty\sum_{i=1}^N\lambda_j\langle D_\mathcal H g_i(t,x),e_j\rangle^2_\mathcal H \langle e_i,e_i   \rangle_\mathcal H\stackrel{\eqref{e2.9}}{=}\sum_{i,j=1}^N\frac{\lambda_j}{\lambda_i}
\left( \frac{\partial g_i}{\partial e_j}(t,x)  \right)^2,  
\end{array} 
$$
whereas similarly
$$
\|D F(t,x)\|^2_{L^2(H,H)}=\sum_{i,j=1}^N \left( \frac{\partial g_i}{\partial e_j}(t,x)   \right) ^2.
$$
Therefore, the spaces in conditions \eqref{e2.7}, \eqref{e2.8} are incompatible and hence so are \eqref{e2.7}and \eqref{e2.8}. A further difference to \cite{AF09} is that unlike in (the uniqueness part of)
\cite[Theorem 3.1]{AF09} we do not have to assume any exponential $\mu\otimes dt$-integrability of the Gaussian divergence of $F$, i.e. of the negative part of $(-D^*_\mathcal H F)$ where $D^*$ is the adjoint of
$$
D:W^{1,2}_\mathcal H(H,\mu)\subset L^2(H,\mu)\to L^2(H;\mathcal H,\mu).
$$
It is easy to construct examples where this exponential integrability does not hold for $F$, while $F$ satisfies all other assumptions in Theorm \ref{t2.3}.
\end{Remark}

\section{Proof of Proposition \ref{p2.4}(i)}

 \subsection{A representation formula for the commutator}  
 We shall use the notation
  \begin{equation}
 \label{e3.1}
\mbox{\rm div}_QF(t,x):=\mbox{\rm Tr}\;[DF(t,x)]-\langle Q^{-1}x,F(t,x) \rangle.
\end{equation}
\begin{Proposition}
 \label{p3.1}
We have
   \begin{equation}
 \label{e3.2}
 \begin{array}{l}
\ds B_\epsilon (u,F)(t,x) =\int_H\mbox{\rm div}_Q F(t,T_\epsilon x+S_\epsilon y)\,u(t,T_\epsilon x+S_\epsilon y)\mu(dy) \\
\\
\ds-\int_H[g_\epsilon(t,T_\epsilon x+ S_\epsilon y, -S_\epsilon x+ T_\epsilon y)-g_\epsilon(t,x,y)]\,u(T_\epsilon x+ S_\epsilon y)\mu(dy)\\
\\
=:B^1_\epsilon (u,F)(t,x)+B^2_\epsilon (u,F)(t,x),
 \end{array}
 \end{equation}  
 where
  \begin{equation}
 \label{e3.3}
  g_\epsilon(t,x,y):=\langle \tfrac{Q^{-1} T_\epsilon}{S_\epsilon}\;F(t,x), y \rangle.
 \end{equation}  
 
 \end{Proposition}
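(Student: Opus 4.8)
The plan is to freeze $t$ as a parameter and write $\varphi:=u(t,\cdot)$ and $f:=F(t,\cdot)$; since $u\in\mathcal D_T$ and $F\in\mathcal V\mathcal FC^1_{b,T}$, both are smooth and finitely based, so every operation below reduces to a classical finite-dimensional computation.

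\emph{Step 1 (unfolding the semigroup).} Starting from the Mehler formula \eqref{e1.13}, differentiation under the integral sign gives $DP_\epsilon\varphi(x)=T_\epsilon\int_H D\varphi(T_\epsilon x+S_\epsilon y)\,\mu(dy)$, using that $T_\epsilon$ is symmetric. Writing $z:=T_\epsilon x+S_\epsilon y$ and applying \eqref{e1.13} also to the second term in \eqref{e2.4}, the two contributions combine into
$$
B_\epsilon(u,F)(t,x)=\int_H\langle T_\epsilon f(x)-f(z),\,D\varphi(z)\rangle\,\mu(dy).
$$

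\emph{Step 2 (Gaussian integration by parts in $y$).} Since $D_y[\varphi(z)]=S_\epsilon D\varphi(z)$ and $S_\epsilon$ is invertible and symmetric, I rewrite the integrand as $\langle\tilde G(y),D_y[\varphi(z)]\rangle$ with $\tilde G(y):=S_\epsilon^{-1}(T_\epsilon f(x)-f(z))$, and apply the divergence formula for $\mu=N_Q$, namely $\int_H\langle D_y\psi,G\rangle\,d\mu=\int_H\psi\,[\langle Q^{-1}y,G\rangle-\mathrm{Tr}(D_yG)]\,d\mu$. Because $D_y\tilde G(y)=-S_\epsilon^{-1}Df(z)S_\epsilon$, cyclicity of the trace yields $-\mathrm{Tr}(D_y\tilde G)=\mathrm{Tr}(Df(z))$, whence
$$
B_\epsilon(u,F)(t,x)=\int_H\varphi(z)\Big[\mathrm{Tr}(Df(z))-\langle S_\epsilon^{-1}Q^{-1}y,f(z)\rangle+\langle S_\epsilon^{-1}T_\epsilon Q^{-1}y,f(x)\rangle\Big]\mu(dy).
$$

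\emph{Step 3 (the rotation and the algebraic identity).} Setting $w:=-S_\epsilon x+T_\epsilon y$, orthogonality of $\mathcal R$ in \eqref{e1.8} gives the inverse relation $y=S_\epsilon z+T_\epsilon w$; substituting this and using the commutation of $Q,T_\epsilon,S_\epsilon$ (all functions of $Q$) together with $S_\epsilon^2+T_\epsilon^2=1$ from \eqref{e1.7}, one finds $S_\epsilon^{-1}Q^{-1}y=Q^{-1}z+\tfrac{Q^{-1}T_\epsilon}{S_\epsilon}w$. Hence, recalling \eqref{e3.3} and the symmetry of $\tfrac{Q^{-1}T_\epsilon}{S_\epsilon}$, the first-order terms collapse to $-\langle Q^{-1}z,f(z)\rangle-g_\epsilon(t,z,w)+g_\epsilon(t,x,y)$; combined with $\mathrm{Tr}(Df(z))$ this produces exactly $\mathrm{div}_Q f(z)$ of \eqref{e3.1} plus the two $g_\epsilon$-terms, which is the asserted decomposition $B^1_\epsilon+B^2_\epsilon$ in \eqref{e3.2}.

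The only genuinely delicate point is the legitimacy of the infinite-dimensional integration by parts, together with the finiteness of $\mathrm{Tr}(Df)$ and of $\langle Q^{-1}y,f\rangle$. This is where finite-basedness is essential: $f$ takes values in a fixed finite-dimensional span and $z$ depends on $y$ only through finitely many coordinates (because $T_\epsilon,S_\epsilon$ are diagonal in the eigenbasis of $Q$), so $Df(z)$ has finite rank and all traces and $Q^{-1}$-pairings are finite sums, reducing Step 2 to the classical finite-dimensional Gaussian divergence theorem. Everything else is routine operator bookkeeping driven by \eqref{e1.7}.
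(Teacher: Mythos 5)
Your proof is correct and reproduces \eqref{e3.2} exactly, but it runs along a genuinely different track from the paper's. The paper works in the kernel picture: it writes $P_\epsilon$ via the density $\rho(\epsilon,x,y)$ of \eqref{e1.9}, integrates by parts against this weight using the explicit identities \eqref{e1.11}--\eqref{e1.12} for $D_x\rho$ and $D_y\rho$, reaches an intermediate formula involving $Q_\epsilon^{-1}T_\epsilon$, and only then passes to the Mehler form by two successive changes of variables ($y\mapsto T_\epsilon x+y$ under $N_{Q_\epsilon}$, then $y\mapsto S_\epsilon y$ under $N_Q$, via $x-T_\epsilon(y+T_\epsilon x)=S_\epsilon^2x-T_\epsilon y$). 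You instead stay in the Mehler picture \eqref{e1.13} throughout: differentiation under the integral gives $B_\epsilon=\int_H\langle T_\epsilon f(x)-f(z),D\varphi(z)\rangle\,d\mu$, a single Gaussian integration by parts in the noise variable $y$ with the auxiliary field $\tilde G=S_\epsilon^{-1}\bigl(T_\epsilon f(x)-f(z)\bigr)$ produces the divergence terms, and the rotation inverse $y=S_\epsilon z+T_\epsilon w$ sorts them into $\mathrm{div}_Q f(z)$ plus the two $g_\epsilon$-terms. What your route buys: no kernel, no derivatives of $\rho$, no double change of variables; the orthogonality \eqref{e1.7}--\eqref{e1.8} enters only once, through the inversion. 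What it costs (and what the kernel route avoids): your $\tilde G$ requires $S_\epsilon^{-1}$ to be bounded, i.e.\ that the spectrum of $S_\epsilon$ is bounded below by $(1-e^{-\epsilon/\lambda_1})^{1/2}>0$ for fixed $\epsilon>0$ (true, since the eigenvalues $(1-e^{-\epsilon/\lambda_k})^{1/2}$ increase to $1$ as the $\lambda_k$ decrease, but worth stating); and the legitimacy of the integration by parts and of the pairings $\langle Q^{-1}y,\tilde G\rangle$ and $\mathrm{Tr}(D_y\tilde G)$ rests on the finite-based reduction you flag at the end, which does work because $f$ takes values in a fixed finite span and $T_\epsilon$, $S_\epsilon$, $Q^{-1}$ are simultaneously diagonal in the eigenbasis of $Q$. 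Your two key identities, $\mathrm{Tr}(S_\epsilon^{-1}Df(z)S_\epsilon)=\mathrm{Tr}(Df(z))$ and $S_\epsilon^{-1}Q^{-1}y=Q^{-1}z+\tfrac{Q^{-1}T_\epsilon}{S_\epsilon}\,w$, are both correct, and all signs come out matching \eqref{e3.2}.
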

 \begin{proof}
 Concerning the second addendum of the commutator \eqref{e2.4}, we have by \eqref{e1.9}, using a well known integration by parts formula for Gaussian measures,
 $$
 \begin{array}{l}
\ds (P_\epsilon(\langle F(t,\cdot),D_xu(t,\cdot) \rangle)(x)=\int_H\langle F(t,y),D_yu(t,y) \rangle\,\rho(\epsilon,x,y)\mu(dy)\\
\\
\ds=-\int_H\mbox{\rm div}\; F(t,y)\,u(t,y)\,\rho(\epsilon,x,y)\mu(dy)\\
\\
\ds-\int_H\langle F(t,y),D_y\rho(\epsilon,x,y) \rangle\,u(t,y)\mu(dy)
\\
\\
\ds +\int_H\langle Q^{-1}y,F(t,y) \rangle\,u(t, y)\,\rho(\epsilon,x,y)\mu(dy).
\end{array}
 $$
 Taking into account \eqref{e1.12}, yields
   \begin{equation}
 \label{e3.4}
 \begin{array}{l}
\ds (P_\epsilon(\langle F(t,\cdot),D_xu(t,\cdot) \rangle)(x) =-\int_H\mbox{\rm div}_Q F(t,y)\,u(t,y)\,\rho(\epsilon,x,y)\mu(dy)\\
\\
\ds-\int_H\langle F(t,y),Q_\epsilon^{-1}T_\epsilon (x-T_\epsilon y) \rangle\,u(t,y)\,\rho(\epsilon,x,y)\,\mu(dy).
\end{array}
 \end{equation} 
   Concerning the first addendum of the commutator, we have, taking into account \eqref{e1.11},
 $$
 \begin{array}{l}
\ds \langle F(t,x),D_xP_\epsilon u(t,x) \rangle=\int_H\langle F(t,x),D_x\rho(\epsilon,x,y) \rangle\,u(t,y)\,\mu(dy)\\
\\
\ds=\int_H\langle F(t,x),Q_\epsilon^{-1}T_\epsilon (y-T_\epsilon x) \rangle\,u(t,y)\,\rho(\epsilon,x,y)\,\mu(dy).
\end{array}
 $$
 So, we obtain
 \begin{equation}
 \label{e3.5}
 \begin{array}{l}
\ds B_\epsilon(u,F)(t,x) =\int_H\mbox{\rm div}_Q F(t,y)\,u(t,y)\,\rho(\epsilon,x,y)\mu(dy)\\
\\
\ds+\int_H\langle F(t,x),Q_\epsilon^{-1}T_\epsilon (y-T_\epsilon x) \rangle\,u(t,y)\,\rho(\epsilon,x,y)\,\mu(dy)\\
\\
\ds+\int_H\langle F(t,y),Q_\epsilon^{-1}T_\epsilon  (x-T_\epsilon y) \rangle\,u(t,y)\,\rho(\epsilon,x,y)\,\mu(dy).
\end{array}
 \end{equation} 
 Since $\rho(\epsilon,x,y)\,\mu(dy)=N_{T_\epsilon x,Q_\epsilon}(dy)$ we  can write \eqref{e3.5} as
$$
 \begin{array}{l}
\ds B_\epsilon(u,F)(t,x) =\int_H\mbox{\rm div}_Q F(t,T_\epsilon x+y)\,u(t,T_\epsilon x+y)\,N_{Q_\epsilon}(dy)\\
\\
\ds+\int_H\langle F(t,x),Q_\epsilon^{-1}T_\epsilon y \rangle\,u(t,T_\epsilon x+y)\,N_{Q_\epsilon}(dy)\\
\\
\ds+\int_H\langle F(t,T_\epsilon x+y),Q_\epsilon^{-1}T_\epsilon  (x-T_\epsilon (y+T_\epsilon x)) \rangle\,u(t,T_\epsilon x+y)\,N_{Q_\epsilon}(dy).
\end{array}
$$ 
Since  $x-T_\epsilon (y+T_\epsilon x)=x-T^2_\epsilon x-T_\epsilon  y$ $= S_\epsilon^2x-T_\epsilon  y$,  using the Mehler formula \eqref{e1.13} we have
$$
 \begin{array}{l}
\ds B_\epsilon(u,F)(t,x) =\int_H\mbox{\rm div}_Q F(t,T_\epsilon x+S_\epsilon y)\,u(t,T_\epsilon x+S_\epsilon y)\,N_{Q}(dy)\\
\\
\ds+\int_H\langle F(t,x),Q_\epsilon^{-1}T_\epsilon S_\epsilon  y \rangle\,u(t,T_\epsilon x+S_\epsilon y)\,N_{Q}(dy)\\
\\
\ds+\int_H\langle F(t,T_\epsilon x+S_\epsilon y),Q_\epsilon^{-1}T_\epsilon S_\epsilon(S_\epsilon x-T_\epsilon  y) \rangle\,u(t,T_\epsilon x+S_\epsilon y)\,N_{Q}(dy),
\end{array}
$$ 
which coincides with \eqref{e3.2}. 
 \end{proof}
 We write now $B^2_\epsilon (u,F)(t,x)$ in a more suitable form.
 \begin{Proposition}
 \label{p3.2}
 We have
  \begin{equation}
 \label{e3.6}
  \begin{array}{l}
  \ds B^2_\epsilon (u,F)(t,x)\\
  \\
  \ds = \frac\epsilon2\int_H\int_0^1 [\langle \tfrac{Q^{-1} T_\epsilon}{S_\epsilon}\;DF(t,x_\xi) (Q^{-1} \tfrac{T_{\epsilon\xi}}{S_{\epsilon\xi} }y_\xi),y_\xi\rangle+\mbox{\rm div}\;G(t,x_\epsilon)]u(t,x_1)\,d\xi\,\mu(dy)\\
  \\
\ds  + \frac\epsilon2\int_H\int_0^1\mbox{\rm div}_Q\;G(t,x_\xi)u(t,x_1)\,d\xi\,\mu(dy):=B^{2,1}_\epsilon (u,F)(t,x)+B^{2,2}_\epsilon (u,F)(t,x), 
   \end{array}
 \end{equation}
 where
  \begin{equation}
 \label{e3.7}
 G(t,x)=Q^{-1}\tfrac{T_\epsilon}{S_\epsilon}\tfrac{T_{\epsilon\xi}}{S_{\epsilon\xi} }F(t,x),
 \end{equation}
 
 \end{Proposition}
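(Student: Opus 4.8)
The plan is to write the difference of $g_\epsilon$-values defining $B^2_\epsilon$ as an integral along a path of rotations and then differentiate. The arguments $(T_\epsilon x+S_\epsilon y,\,-S_\epsilon x+T_\epsilon y)$ in the first $g_\epsilon$-term of \eqref{e3.2} are exactly $\mathcal R(x,y)$ for the orthogonal matrix $\mathcal R$ of \eqref{e1.8}. I would interpolate between the identity and $\mathcal R$ by setting, for $\xi\in[0,1]$,
$$x_\xi:=T_{\epsilon\xi}x+S_{\epsilon\xi}y,\qquad y_\xi:=-S_{\epsilon\xi}x+T_{\epsilon\xi}y,$$
so that $(x_0,y_0)=(x,y)$ and $(x_1,y_1)=(T_\epsilon x+S_\epsilon y,-S_\epsilon x+T_\epsilon y)$, using $T_0=1$, $S_0=0$ and \eqref{e1.7}. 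The fundamental theorem of calculus then gives $g_\epsilon(t,x_1,y_1)-g_\epsilon(t,x,y)=\int_0^1\frac{d}{d\xi}g_\epsilon(t,x_\xi,y_\xi)\,d\xi$, whence $B^2_\epsilon(u,F)(t,x)=-\int_H\int_0^1\frac{d}{d\xi}g_\epsilon(t,x_\xi,y_\xi)\,d\xi\,u(t,x_1)\,\mu(dy)$, and everything reduces to this $\xi$-derivative.

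The key computation is that of $\dot x_\xi,\dot y_\xi$. From $T_t=e^{-\frac t2 Q^{-1}}$ one has $\frac{d}{d\xi}T_{\epsilon\xi}=-\frac\epsilon2 Q^{-1}T_{\epsilon\xi}$, and differentiating $S_t^2=1-T_t^2$ gives $\frac{d}{d\xi}S_{\epsilon\xi}=\frac\epsilon2\frac{Q^{-1}T_{\epsilon\xi}^2}{S_{\epsilon\xi}}$. Substituting and using $T_{\epsilon\xi}^2+S_{\epsilon\xi}^2=1$, both derivatives collapse to the clean form
$$\dot x_\xi=\frac\epsilon2\,\frac{Q^{-1}T_{\epsilon\xi}}{S_{\epsilon\xi}}\,y_\xi,\qquad \dot y_\xi=-\frac\epsilon2\,\frac{Q^{-1}T_{\epsilon\xi}}{S_{\epsilon\xi}}\,x_\xi.$$
Differentiating $g_\epsilon(t,x_\xi,y_\xi)=\langle\tfrac{Q^{-1}T_\epsilon}{S_\epsilon}F(t,x_\xi),y_\xi\rangle$ (see \eqref{e3.3}) by the product and chain rules then produces exactly two terms: the derivative-of-$F$ term $\frac\epsilon2\langle\tfrac{Q^{-1}T_\epsilon}{S_\epsilon}DF(t,x_\xi)(\tfrac{Q^{-1}T_{\epsilon\xi}}{S_{\epsilon\xi}}y_\xi),y_\xi\rangle$, which is already the integrand of $B^{2,1}_\epsilon$, and a zero-order term $-\frac\epsilon2\langle\tfrac{Q^{-1}T_\epsilon}{S_\epsilon}F(t,x_\xi),\tfrac{Q^{-1}T_{\epsilon\xi}}{S_{\epsilon\xi}}x_\xi\rangle$.

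It remains to convert the zero-order term into divergences. Moving the self-adjoint operator $\tfrac{Q^{-1}T_{\epsilon\xi}}{S_{\epsilon\xi}}$ onto the other factor and using $\tfrac{Q^{-1}T_{\epsilon\xi}}{S_{\epsilon\xi}}\tfrac{Q^{-1}T_\epsilon}{S_\epsilon}=Q^{-1}\big(Q^{-1}\tfrac{T_\epsilon}{S_\epsilon}\tfrac{T_{\epsilon\xi}}{S_{\epsilon\xi}}\big)$ rewrites it as $-\frac\epsilon2\langle Q^{-1}x_\xi,G(t,x_\xi)\rangle$, with $G$ precisely \eqref{e3.7}. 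The defining identity \eqref{e3.1} for $\mathrm{div}_Q$, applied to the field $x\mapsto G(t,x)$, then reads $\langle Q^{-1}x_\xi,G(t,x_\xi)\rangle=\mathrm{div}\,G(t,x_\xi)-\mathrm{div}_Q G(t,x_\xi)$, which splits the zero-order term into the $\mathrm{div}\,G$ contribution of $B^{2,1}_\epsilon$ and the $\mathrm{div}_Q G$ contribution of $B^{2,2}_\epsilon$. Collecting the signs and integrating in $\xi$ against $-u(t,x_1)$ and over $\mu(dy)$ assembles the decomposition \eqref{e3.6}, with $B^{2,1}_\epsilon$ carrying the $DF$- and $\mathrm{div}\,G$-terms and $B^{2,2}_\epsilon$ the $\mathrm{div}_Q G$-term; note that beyond the algebraic definition \eqref{e3.1} no integration by parts is required (in contrast with Proposition \ref{p3.1}).

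The main obstacle is the derivative algebra together with the verification that everything is well defined. The simplification of $\dot x_\xi,\dot y_\xi$ is delicate: it relies on $T_{\epsilon\xi}^2+S_{\epsilon\xi}^2=1$ and on the \emph{same} operator $\tfrac{Q^{-1}T_{\epsilon\xi}}{S_{\epsilon\xi}}$ surfacing in both derivatives, so that its composition with $\tfrac{Q^{-1}T_\epsilon}{S_\epsilon}$ recombines into exactly $Q^{-1}$ times the operator defining $G$. For well-definedness the strong smoothing of $P_\epsilon$ is essential: although $Q^{-1}$ is unbounded, $Q^{-1}T_\epsilon=Q^{-1}e^{-\frac\epsilon2 Q^{-1}}$ is bounded for $\epsilon>0$, so that $G$, $\mathrm{div}\,G$ and $\mathrm{div}_Q G$ all make sense; the singular factor $1/S_{\epsilon\xi}\sim\xi^{-1/2}$ as $\xi\to0$ is integrable in $\xi$, which legitimises the fundamental theorem of calculus, while the interchange of the $d\xi$- and $\mu(dy)$-integrations follows by dominated convergence using the boundedness and finite-basedness of $F\in\mathcal V\mathcal FC^1_{b,T}$ and $u\in\mathcal D_T$.
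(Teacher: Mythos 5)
Your proposal is correct and follows essentially the same route as the paper's own proof: the same interpolation \eqref{e3.8} between $(x,y)$ and $\mathcal R(x,y)$, the fundamental theorem of calculus applied to $\xi\mapsto g_\epsilon(t,x_\xi,y_\xi)$, the chain rule producing the $DF$-term plus a zero-order term, and the completion of the $Q$-divergence via \eqref{e3.1} with $G$ as in \eqref{e3.7}. Your sign bookkeeping is in fact the internally consistent one: starting from \eqref{e3.2} the method yields
\begin{equation*}
B^2_\epsilon(u,F)=-\tfrac{\epsilon}{2}\int_H\!\int_0^1\Bigl[\bigl\langle\tfrac{Q^{-1}T_\epsilon}{S_\epsilon}DF(t,x_\xi)\bigl(\tfrac{Q^{-1}T_{\epsilon\xi}}{S_{\epsilon\xi}}y_\xi\bigr),y_\xi\bigr\rangle-\mathrm{div}\,G(t,x_\xi)\Bigr]u(t,x_1)\,d\xi\,\mu(dy)
-\tfrac{\epsilon}{2}\int_H\!\int_0^1\mathrm{div}_Q G(t,x_\xi)\,u(t,x_1)\,d\xi\,\mu(dy),
\end{equation*}
which differs from the printed \eqref{e3.6} only through sign slips in the paper's own derivation (the minus sign of \eqref{e3.2} is dropped in passing to \eqref{e3.9}, and the second formula in \eqref{e3.10} has a sign error); this discrepancy is immaterial because only $L^{p'}$-norms of the two pieces are used afterwards, and your relative sign ($DF$-term \emph{minus} $\mathrm{div}\,G$) is exactly the one the paper itself employs in the estimate \eqref{e3.16}.
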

 \begin{proof}
 
 We start from the expression of  $B^2_\epsilon (u,F)(t,x)$ given by  \eqref{e3.2}
  and for any $\xi\in[0,1]$ we set
\begin{equation}
 \label{e3.8}
 x_\xi=T_{\epsilon\xi} x+ S_{\epsilon\xi} y,\quad y_\xi=-S_{\epsilon\xi} x+T_{\epsilon\xi} y,
 \end{equation} 
which implies
$$
 x=T_{\epsilon\xi} x_\xi- S_{\epsilon\xi} y_\xi,\quad y=S_{\epsilon\xi} x_\xi+T_{\epsilon\xi} y_\xi.
$$ 
Notice that
$$
\begin{array}{l}
T_{\epsilon\xi} x- \tfrac{T^2_{\epsilon\xi}}{S_{\epsilon\xi} }y= -\tfrac{T_{\epsilon\xi}}{S_{\epsilon\xi} }y_\xi,\quad
T_{\epsilon\xi} y- \tfrac{T^2_{\epsilon\xi}}{S_{\epsilon\xi} }x= -\tfrac{T_{\epsilon\xi}}{S_{\epsilon\xi} }x_\xi.
\end{array}
$$
Therefore we can write
 \begin{equation}
 \label{e3.9}
  B^2_\epsilon (u,F)(t,x)= \int_H[ g_\epsilon(x_1, y_1 )-g_\epsilon(t,x,y)]\,u(t,x_1)\mu(dy)
 \end{equation}
and,  taking into account that
 \begin{equation}
 \label{e3.10}
 D_\xi x_\xi=-\tfrac12\,Q^{-1}\epsilon (T_{\epsilon\xi} x- \tfrac{T^2_{\epsilon\xi}}{S_{\epsilon\xi} }y),\quad D_\xi y_\xi=\tfrac12\,Q^{-1}\epsilon(\tfrac{T^2_{\epsilon\xi}}{S_{\epsilon\xi} }x-T_{\epsilon\xi} y),
\end{equation}
we have
  \begin{equation}
 \label{e3.11}
 \begin{array}{l}
  \ds g_\epsilon(t,x_{1}, y_{1})-g_\epsilon(t,x,y)=\int_0^1 D_\xi g_\epsilon(t,x_{\xi}, y_{\xi})d\xi\\
  \\
  \ds=\int_0^1[D_xg_\epsilon(t,x_{\xi}, y_{\xi}) D_\xi x_\xi+D_yg_\epsilon(t,x_{\xi}, y_{\xi}) D_\xi y_\xi]\;d\xi\\\\
  \ds=\int_0^1D_xg_\epsilon(t,x_{\xi}, y_{\xi}) \left(-\tfrac12\,Q^{-1}\epsilon (T_{\epsilon\xi} x- \tfrac{T^2_{\epsilon\xi}}{S_{\epsilon\xi} }y)\right)d\xi\\
  \\
\ds +\int_0^1 D_yg_\epsilon(t,x_{\xi}, y_{\xi}) \left(\tfrac12\,Q^{-1}\epsilon(\tfrac{T^2_{\epsilon\xi}}{S_{\epsilon\xi} }x-T_{\epsilon\xi} y)\right)d\xi   
     \end{array}
 \end{equation}
Therefore
  \begin{equation}
 \label{e3.12}
  \begin{array}{l}
  \ds B^2_\epsilon(u, F)(t,x)\\
  \\
  \ds = \int_H\int_0^1D_xg_\epsilon(t,x_{\xi}, y_{\xi}) \left(-\tfrac12\,Q^{-1}\epsilon (T_{\epsilon\xi} x- \tfrac{T^2_{\epsilon\xi}}{S_{\epsilon\xi} }y)\right)\;u(t,x_1)\,d\xi\,\mu(dy)\\
  \\
\ds  + \int_H\int_0^1D_yg_\epsilon(t,x_{\xi}, y_{\xi}) \left(\tfrac12\,Q^{-1}\epsilon(\tfrac{T^2_{\epsilon\xi}}{S_{\epsilon\xi} }x-T_{\epsilon\xi} y)\right)\,d\xi\,\mu(dy)
   \end{array}
 \end{equation}
 But
 $$
 D_x g_\epsilon(t,x_\xi,y_\xi)z=\langle \tfrac{Q^{-1} T_\epsilon}{S_\epsilon}\;DF(t,x_\xi)z, y_\xi  \rangle,
 $$
 
 $$
 D_y g_\epsilon(t,x_\xi,y_\xi)z=\langle \tfrac{Q^{-1} T_\epsilon}{S_\epsilon}\;F(t,x_\xi), z \rangle.
 $$ 
 Therefore from \eqref{e3.12} we get
  \begin{equation}
 \label{e3.13}
  \begin{array}{l}
  \ds B^2_\epsilon(F, u)(t,x)\\
  \\
  \ds =\tfrac\epsilon2 \int_H\int_0^1 \left< \tfrac{Q^{-1} T_\epsilon}{S_\epsilon}\;DF(t,x_\xi) (- Q^{-1}  (T_{\epsilon\xi} x- \tfrac{T^2_{\epsilon\xi}}{S_{\epsilon\xi} }y)),y_\xi\right>\;u(t,x_1)\,d\xi\,\mu(dy)\\
  \\
\ds  +\tfrac\epsilon2 \int_H\int_0^1\left< \tfrac{Q^{-1} T_\epsilon}{S_\epsilon}\;F(t,x_\xi),  Q^{-1} (\tfrac{T^2_{\epsilon\xi}}{S_{\epsilon\xi} }x-T_{\epsilon\xi} y) \right> u(t,x_1)\,d\xi\,\mu(dy). 
   \end{array}
 \end{equation}
Equivalently
 \begin{equation}
 \label{e3.14}
  \begin{array}{l}
  \ds B^2_\epsilon (u,F)(t,x)\\
  \\
  \ds = \frac\epsilon2\int_H\int_0^1 \left< \tfrac{Q^{-1} T_\epsilon}{S_\epsilon}\;DF(t,x_\xi) (Q^{-1} \tfrac{T_{\epsilon\xi}}{S_{\epsilon\xi} }y_\epsilon),y_\xi\right>u(t,x_1)\,d\xi\,\mu(dy)\\
  \\
\ds  + \frac\epsilon2\int_H\int_0^1\left< \tfrac{Q^{-1} T_\epsilon}{S_\epsilon}\;F(t,x_\xi),Q^{-1} \tfrac{T_{\epsilon\xi}}{S_{\epsilon\xi} }x_\xi\right> u(t,x_1)\,d\xi\,\mu(dy), 
   \end{array}
 \end{equation}
 Now the conclusion follows  by completing the $Q$-divergence introducing $G(t,x)$  defined by \eqref{e3.7},
  and writing
 $$
 \left< \tfrac{Q^{-1} T_\epsilon}{S_\epsilon}\;F(t,x_\xi),Q^{-1} \tfrac{T_{\epsilon\xi}}{S_{\epsilon\xi} }x_\xi) \right>=\mbox{\rm div}_Q\;G(t,x_\epsilon)+\mbox{\rm div}\;G(t,x_\epsilon).
 $$
\end{proof}
 
 \subsection {Bound of the commutator}
It is enough to bound  $B^{2}_\epsilon (u,F)$ because the estimate of   $B^{1}_\epsilon (u,F)$ is analogous to that of  $B^{2,2}_\epsilon (u,F)$. Let us first estimate $\||B^{2,1}_\epsilon (u,F)\|_{L^{p'}(H_T,dt\otimes\mu)}$.
\begin{Proposition}
\label{p3.3}
Let $p>2$, $p'=\tfrac{p}{p-1}$, $s>p'$, $\tfrac{1}{p'}=\tfrac{1}{r}+\tfrac{1}{s}$.
Then we have
\begin{equation}
 \label{e3.15}
 \begin{array}{l}
\ds \|B^{2,1}_\epsilon (u,F)\|_{L^{p'}(H_T,dt\otimes\mu)}\\
\\
\ds\le  C'\|u\|_{L^{r}(H_T,dt\otimes\mu)}  \left(\int_0^T\int_H \mbox{\rm Tr}\;[(DF(x))^{s}]\,dt\,\mu(dx)\right)^{\frac1{s}}. 
\end{array}
\end{equation}
 
 \end{Proposition}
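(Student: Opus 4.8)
The plan is to estimate the $\xi$-integrand for each fixed $\xi\in(0,1]$ with a constant of the form $C\,\xi^{-1/2}$ that is uniform in $\epsilon$, and only at the very end to integrate in $\xi$, using $\int_0^1\xi^{-1/2}\,d\xi=2<\infty$. Writing $B^{2,1}_\epsilon(u,F)=\int_0^1 h_\xi\,d\xi$, where $h_\xi(t,x)=\tfrac\epsilon2\int_H K_\xi(t,x,y)\,u(t,x_1)\,\mu(dy)$ and $K_\xi$ denotes the bracket in \eqref{e3.6} (the quadratic expression in $y_\xi$ together with $\mbox{\rm div}\,G(t,x_\xi)$), Minkowski's integral inequality in $\xi$ reduces the claim to
$$
\|h_\xi\|_{L^{p'}(H_T,dt\otimes\mu)}\le C\,\xi^{-1/2}\,\|u\|_{L^r(H_T,dt\otimes\mu)}\Big(\int_0^T\!\!\int_H\|DF(t,x)\|_{L_2(H,H)}^{s}\,\mu(dx)\,dt\Big)^{1/s}.
$$

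To prove this I would exploit two invariance facts. First, by \eqref{e1.7} the variable $x_1=T_\epsilon x+S_\epsilon y$ is $N_Q=\mu$-distributed under $\mu\times\mu$; second, by \eqref{e1.8} (with $\epsilon$ replaced by $\epsilon\xi$) the map $(x,y)\mapsto(x_\xi,y_\xi)$ is orthogonal, hence $\mu\times\mu$-preserving, so that $x_\xi,y_\xi$ are independent and $\mu$-distributed. Applying Jensen's inequality to the probability average $\int_H(\,\cdot\,)\mu(dy)$ and then H\"older in the joint variable $(t,x,y)$ with the conjugate exponents $r/p'$ and $s/p'$ (so that $\tfrac{p'}{r}+\tfrac{p'}{s}=1$), I split $|u(t,x_1)|$ into the $L^r$-factor and $K_\xi$ into the $L^s$-factor:
$$
\|h_\xi\|_{L^{p'}}\le\tfrac\epsilon2\Big(\int_0^T\!\!\iint|u(t,x_1)|^{r}\mu(dx)\mu(dy)dt\Big)^{1/r}\Big(\int_0^T\!\!\iint|K_\xi|^{s}\mu(dx)\mu(dy)dt\Big)^{1/s}.
$$
By the first invariance the $u$-factor is exactly $\|u\|_{L^r(H_T,dt\otimes\mu)}$, uniformly in $\epsilon,\xi$.

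For the $K_\xi$-factor I would pass to the variables $(\tilde x,\tilde y):=(x_\xi,y_\xi)$, under which $K_\xi$ becomes $\langle M\tilde y,\tilde y\rangle+\mbox{\rm div}\,G(t,\tilde x)$ with $M:=\tfrac{Q^{-1}T_\epsilon}{S_\epsilon}DF(t,\tilde x)\,Q^{-1}\tfrac{T_{\epsilon\xi}}{S_{\epsilon\xi}}$. Since $DF$ depends only on $\tilde x$ and $\tilde y$ is an independent $\mu$-variable, the $\tilde y$-integral is a Gaussian average of a degree-two polynomial; crucially, $\mbox{\rm div}\,G(t,\tilde x)=\mbox{\rm Tr}[Q^{-1}\tfrac{T_\epsilon T_{\epsilon\xi}}{S_\epsilon S_{\epsilon\xi}}DF]$ equals (up to sign) the $\tilde y$-mean $\mbox{\rm Tr}[MQ]$ of the quadratic form, so that the integrand is a centred element of the second Wiener chaos. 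By Gaussian hypercontractivity (equivalence of all $L^{s}$-norms on the second chaos) its $L^{s}(\mu(d\tilde y))$-norm is controlled by $C_s$ times the Hilbert--Schmidt norm of the symmetrization of
$$
Q^{1/2}MQ^{1/2}=\tfrac{Q^{-1/2}T_\epsilon}{S_\epsilon}\,DF(t,\tilde x)\,\tfrac{Q^{-1/2}T_{\epsilon\xi}}{S_{\epsilon\xi}}.
$$
The decisive estimates are the two operator bounds
$$
\big\|\sqrt\epsilon\,Q^{-1/2}T_\epsilon S_\epsilon^{-1}\big\|\le C,\qquad\big\|\sqrt{\epsilon\xi}\,Q^{-1/2}T_{\epsilon\xi}S_{\epsilon\xi}^{-1}\big\|\le C,
$$
which follow by diagonalizing in the $Q$-eigenbasis from $\sup_{a>0}\sqrt a\,e^{-a/2}(1-e^{-a})^{-1/2}<\infty$. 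They give $\|Q^{1/2}MQ^{1/2}\|_{L_2(H,H)}\le C^2\epsilon^{-1}\xi^{-1/2}\|DF(t,\tilde x)\|_{L_2(H,H)}$, and integrating in $\tilde x\sim\mu$ produces the $DF$-norm on the right-hand side. Multiplying by the prefactor $\tfrac\epsilon2$, the factor $\epsilon^{-1}$ cancels and the remaining $\xi^{-1/2}$ is integrable, which is exactly the asserted bound.

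The step I expect to be the main obstacle is this balancing of singular operators: both $\tfrac{Q^{-1}T_\epsilon}{S_\epsilon}$ and $\tfrac{Q^{-1}T_{\epsilon\xi}}{S_{\epsilon\xi}}$ are unbounded and blow up as $\epsilon,\epsilon\xi\to0$, and only the sharp $\sqrt\epsilon$- and $\sqrt{\epsilon\xi}$-weighted norms above --- together with the centring provided by $\mbox{\rm div}\,G$ (which removes the genuinely divergent trace term $\mbox{\rm Tr}[MQ]$, infinite in infinite dimensions) and the integrability of $\xi^{-1/2}$ near $0$ --- make the constant in \eqref{e3.15} independent of $\epsilon$. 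A subordinate technical point is the correct application of the second-chaos $L^s$-estimate to the symmetrized Hilbert--Schmidt operator $Q^{1/2}MQ^{1/2}$.
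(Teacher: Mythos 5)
Your proof is correct, and its skeleton matches the paper's own: the paper too reduces to fixed $\xi$ (by duality against $v\in L^p(H_T,dt\otimes\mu)$, which is equivalent to your Minkowski step), uses invariance of $\mu\times\mu$ under the rotations $(x,y)\mapsto(x_\xi,y_\xi)$, and separates $u$ from the kernel by H\"older with the exponents $r/p'$, $s/p'$. Where you genuinely diverge is in the central Gaussian estimate. The paper invokes its Appendix B, Proposition \ref{p4.3a}, i.e.\ the exact moment identity \eqref{e4.5a} for centred quadratic forms derived from the determinant formula for Gaussian Laplace transforms, and then the single operator bound \eqref{e3.19}. You instead view the centred integrand as an element of the second Wiener chaos and use hypercontractivity (for $s\in(1,2]$ merely $\|\cdot\|_{L^s}\le\|\cdot\|_{L^2}$ plus the variance formula), bounding its $L^s(\mu(d\tilde y))$-norm by $C_s\|(Q^{1/2}MQ^{1/2})_{\rm sym}\|_{L_2(H,H)}$, and you split the singularity into the two weighted factors $\sqrt\epsilon\,Q^{-1/2}T_\epsilon S_\epsilon^{-1}$ and $\sqrt{\epsilon\xi}\,Q^{-1/2}T_{\epsilon\xi}S_{\epsilon\xi}^{-1}$. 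This buys two real improvements: your argument is valid for arbitrary real $s$, whereas Proposition \ref{p4.3a} is proved only for integer exponents $m$ and is then applied with exponent $s$ (a gap your route avoids); and it yields the bound in terms of $\|DF\|_{L_2(H,H)}$, the norm actually occurring in $\|F\|_{1,s,T}$ and in Proposition \ref{p2.4}(i), whereas the stated right-hand side $\mbox{\rm Tr}\,[(DF)^{s}]$ is a Schatten-type quantity which for $s<2$ dominates $\|DF\|_{L_2(H,H)}^{s}$ and can even be infinite when the latter is finite --- so your estimate is formally stronger and fits the intended application better. Finally, your parenthetical ``up to sign'' is well taken: the $+\,\mbox{\rm div}\,G$ in \eqref{e3.6} is a sign slip (compare \eqref{e3.16}--\eqref{e3.17}); only the centred combination $\langle M\tilde y,\tilde y\rangle-\mbox{\rm Tr}\,[MQ]$ is controllable, exactly as you observe, since the uncentred trace term is not dominated by $\|DF\|_{L_2(H,H)}$ uniformly in the regularization parameters.
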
 
 \begin{proof}

  We recall that
   $$
  \begin{array}{l}
  \ds B^{2,1}_\epsilon (u,F)(t,x)\\
  \\
  \ds= \frac\epsilon2\int_H\int_0^1 \left[\left< \tfrac{Q^{-1} T_\epsilon}{S_\epsilon}\;DF(t,x_\xi) (Q^{-1} \tfrac{T_{\epsilon\xi}}{S_{\epsilon\xi} }y_\xi),y_\xi\right>+\mbox{\rm div}\;G(t,x_\epsilon)\right]u(t,x_1)\,d\xi\,\mu(dy)\\
  \\
  \ds=: \int_H\int_0^1 H(t,x_\xi,y_\xi) u(t,x_1)\,d\xi\,\mu(dy).
   \end{array}
 $$
 Then
 $$
  \begin{array}{l}
\ds \int_0^T\int_H|B^{2,1}_\epsilon (u,F)(t,x)|^{p'}dt\,\mu(dx)\\
\\
\ds=\int_0^T\int_H\left[ \int_H\int_0^1 H(t,x_\xi,y_\xi) u(t,x_1)\,d\xi\,\mu(dy\right]^{p'}dt\,\mu(dx).
\end{array}
 $$
 Let now $v\in L^p(H_T,dt\otimes\mu)$. Then
  $$
  \begin{array}{l}
\ds  \left|\int_0^T\int_HB^{2,1}_\epsilon (u,F)(t,x)\,v(t,x)dt\,\mu(dx)\right|\\
\\
\ds\le
\int_0^T\int_0^1\int_H\int_H|H(t,x_\xi,y_\xi) u(t,x_1) v(t,x)|d\xi\,\mu(dx)\,dt\,\mu(dy)\\
\\
\ds\le \int_0^1 d\xi \left(\int_0^T\int_H\int_H|H(t,x_\xi,y_\xi) u(t,x_1)|^{p'}\,dt\,\mu(dx)\,\mu(dy)\right)^{1/p'}\|v\|_{L^p(H_T,dt\otimes\mu)}.
     \end{array}
 $$
 By the arbitrariness of $v$ it follows that
 $$
 \|B^{2,1}_\epsilon (u,F)\|_{L^{p'}(H_T,dt\otimes\mu)}  \le  \int_0^1 d\xi \left(\int_0^T\int_H\int_H|H(t,x_\xi,y_\xi) u(x_1)|^{p'}\,dt\,\mu(dx)\,\mu(dy)\right)^{1/p'}.
$$
 Making   the change of variables \eqref{e3.8} and recalling that it  is invariant for $\mu\times \mu$ so that $\mu(dx)\mu(dy)=\mu(dx_\xi)\mu(dy_\xi)$, we get
\begin{equation}
 \label{e3.16d}
 \begin{array}{l}
  \ds \|B^{2,1}_\epsilon (u,F)\|_{L^{p'}(H_T,dt\otimes\mu)} \\
  \\
  \ds \le \int_0^1 d\xi \left(\int_H\int_H|H(t,x,y) |^{s}\,\mu(dx)\,\mu(dy) \right)^{\frac1{s}}\;\|u\|_{L^{r}(H_T,dt\otimes\mu)},
     \end{array}
\end{equation}
 equivalently
 \begin{equation}
 \label{e3.16}
 \begin{array}{l}
   \ds\|B^{2,1}_\epsilon (u,F)\|_{L^{p'}(H_T,dt\otimes\mu)} \le \tfrac\epsilon2\|u\|_{L^{r}(H_T,dt\otimes\mu)}\\
   \\
\ds\times   \int_0^1 d\xi \left(\int_0^T\int_H\int_H|\langle   DG(t,x) (Q^{-1} \tfrac{T_{\epsilon\xi}}{S_{\epsilon\xi} }y ),\tfrac{S_{\epsilon\xi}}{T_{\epsilon\xi} }y \rangle-\mbox{\rm div}\;G(t,x )| |^{s}\,dt\,\mu(dx)\,\mu(dy) \right)^{\frac1{s}}. 
     \end{array}
 \end{equation} 
 Now we can compute explicitly the integral
 \begin{equation}
 \label{e3.17}
 J_1(t,x):=\int_H |\langle   DG(t,x) (Q^{-1} \tfrac{T_{\epsilon\xi}}{S_{\epsilon\xi} }y ),\tfrac{S_{\epsilon\xi}}{T_{\epsilon\xi} }y \rangle-\mbox{\rm div}\;G(t,x )|^{s} \mu(dy),
  \end{equation}
applying  Proposition \ref{p4.3a} from   Appendix B. 
Setting
 $$
 L=\tfrac{S_{\epsilon\xi}}{T_{\epsilon\xi} }DG(t,x) Q^{-1} \tfrac{T_{\epsilon\xi}}{S_{\epsilon\xi} },\quad M=Q^{1/2}LQ^{1/2},
 $$
 we have Tr $M=$ div $G(t,x)$, so that
 $$
 J_1(t,x)=\int_H |\langle   Ly,y \rangle-\mbox{\rm Tr}\;M|^{s} \mu(dy)
 $$
 Then, taking into account \eqref{e4.5a},   we obtain
 \begin{equation}
 \label{e3.18}
 \begin{array}{l}
 \ds J_1(t,x)=C_{s}\;\mbox{\rm Tr}\;[(DG(t,x))^{s}]=C_{s}\mbox{\rm Tr}\;[(Q^{-1}\tfrac{T_\epsilon}{S_\epsilon}\tfrac{T_{\epsilon\xi}}{S_{\epsilon\xi} }DF(t,x))^{s}].
 \end{array}
\end{equation}
Now we estimate  $\|Q^{-1}\tfrac{T_\epsilon}{S_\epsilon}\tfrac{T_{\epsilon\xi}}{S_{\epsilon\xi} }\|$.
 Since
$$
Q^{-1}\tfrac{T_\epsilon}{S_\epsilon}\tfrac{T_{\epsilon\xi}}{S_{\epsilon\xi} }e_k=2\left(\sqrt{\alpha_k}\,\frac{e^{-\alpha_k\epsilon}}{\sqrt{1-e^{-2\alpha_k\epsilon}}}\right)
\left(\sqrt{\alpha_k}\,\frac{e^{-\alpha_k\epsilon}}{\sqrt{1-e^{-2\alpha_k\epsilon}}}\right)e_k.
$$
So
 \begin{equation}
 \label{e3.19}
\|Q^{-1}\tfrac{T_\epsilon}{S_\epsilon}\tfrac{T_{\epsilon\xi}}{S_{\epsilon\xi} }\|\le \frac{C}{\epsilon \;\xi^{1/2}}
\end{equation}
and
$$
J_1(t,x)\le C'_s\frac{C}{\epsilon \;\xi^{1/2}}\;\mbox{\rm Tr}\;[(DF(x))^{s}]
$$
Now by \eqref{e3.16}   we obtain
 \begin{equation}
 \label{e3.20}
 \begin{array}{l}
   \ds\|B^{2,1}_\epsilon (u,F)\|_{L^{p'}(H_T,dt\otimes\mu)}\\
   \\
   \ds\le C'\tfrac12 \|u\|_{L^{r}(H_T,dt\otimes\mu)}    \int_0^1 \xi^{1/2}d\xi\left(\int_0^T\int_H \mbox{\rm Tr}\;[(DF(x))^{s}]\,dt\,\mu(dx)\right)^{\frac1{s}}\\
   \\
   \ds= C'\|u\|_{L^{r}(H_T,dt\otimes\mu)}  \left(\int_0^T\int_H \mbox{\rm Tr}\;[(DF(x))^{s}]\,dt\,\mu(dx)\right)^{\frac1{s}}. 
     \end{array}
 \end{equation} 
 \end{proof}
 
 \begin{Proposition}
 We have
 $$
  \begin{array}{l}
 \ds\|B^{2,2}_\epsilon (u,F)\|_{L^{p'}(H_T,dt\otimes\mu)}\\
 \\
 \ds\le C\left(\int_0^T\int_H(|Q^{-1/2}F(t,x)|^{s}+\mbox{\rm Tr}\;[(DF(t,x))^{s}])\,dt\,\mu(dx)\right)^{1/s}\; \|u\|_{L^r(H_T,dt\otimes\mu)}.
   \end{array}
 $$
 
 \end{Proposition}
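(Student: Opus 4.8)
The plan is to follow the proof of Proposition \ref{p3.3} verbatim in its ``soft'' part, and then to replace the Gaussian quadratic--form computation (Appendix B) used there by an estimate for a Gaussian divergence. First I would pair $B^{2,2}_\epsilon(u,F)$ with an arbitrary $v\in L^{p}(H_T,dt\otimes\mu)$ and write, using the definition \eqref{e3.1} of $\mbox{\rm div}_Q$ and Fubini,
$$
\int_0^T\!\!\int_H B^{2,2}_\epsilon(u,F)\,v\,\mu(dx)\,dt=\frac{\epsilon}{2}\int_0^1\!\!\int_0^T\!\!\int_H\!\!\int_H \mbox{\rm div}_Q G(t,x_\xi)\,u(t,x_1)\,v(t,x)\,\mu(dy)\,\mu(dx)\,dt\,d\xi .
$$
Since $\tfrac1s+\tfrac1r+\tfrac1p=1$, I apply the three--factor H\"older inequality in $(x,y)$ with exponents $s$ on $\mbox{\rm div}_Q G$, $r$ on $u$ and $p$ on $v$. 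The measure--preserving change of variables \eqref{e3.8}, $(x,y)\mapsto(x_\xi,y_\xi)$, turns $\mbox{\rm div}_Q G(t,x_\xi)$ into a function of the single new variable, so the $y_\xi$--integral is trivial; and since, by \eqref{e1.7}, both $x$ and $x_1$ are first components of orthogonal images of $(x_\xi,y_\xi)$, the marginals of $\mu\times\mu$ under the maps feeding $v$ and $u$ are again $\mu$. Contracting the $u$-- and $v$--factors to $\|u(t,\cdot)\|_{L^r(\mu)}$ and $\|v(t,\cdot)\|_{L^p(\mu)}$, a further H\"older step in $t$ and the arbitrariness of $v$ give
$$
\|B^{2,2}_\epsilon(u,F)\|_{L^{p'}(H_T,dt\otimes\mu)}\le\frac{\epsilon}{2}\,\|u\|_{L^{r}(H_T,dt\otimes\mu)}\int_0^1\left(\int_0^T\!\!\int_H|\mbox{\rm div}_Q G(t,x)|^{s}\,\mu(dx)\,dt\right)^{1/s}d\xi .
$$

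It then remains to bound $\|\mbox{\rm div}_Q G(t,\cdot)\|_{L^s(\mu)}$, where $G=\Lambda_{\epsilon,\xi}F$ with $\Lambda_{\epsilon,\xi}:=Q^{-1}\tfrac{T_\epsilon}{S_\epsilon}\tfrac{T_{\epsilon\xi}}{S_{\epsilon\xi}}$ (cf. \eqref{e3.7}). The decisive observation is that $\mbox{\rm div}_Q G=\mbox{\rm Tr}\,[DG]-\langle Q^{-1}x,G\rangle$ is exactly the Gaussian divergence of $G$, i.e. the $L^2(\mu)$--adjoint of the gradient $D$, so I would invoke its $L^s(\mu)$--continuity. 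The zeroth--order (``creation'') part $\langle Q^{-1}x,G\rangle$ is controlled, through the Gaussian moment estimate, by $|Q^{-1/2}G|=|\Lambda_{\epsilon,\xi}Q^{-1/2}F|\le\|\Lambda_{\epsilon,\xi}\|\,|Q^{-1/2}F|$, producing the $|Q^{-1/2}F|^s$ term, while the first--order part $\mbox{\rm Tr}\,[DG]=\mbox{\rm Tr}\,[\Lambda_{\epsilon,\xi}DF]$ produces the $\mbox{\rm Tr}\,[(DF)^s]$ term. Inserting the operator estimate \eqref{e3.19}, $\|\Lambda_{\epsilon,\xi}\|\le C\epsilon^{-1}\xi^{-1/2}$, the prefactor $\tfrac{\epsilon}{2}$ cancels $\epsilon^{-1}$ and leaves $\xi^{-1/2}$, integrable on $[0,1]$; summing the two contributions over the $\xi$--integral yields the asserted bound.

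The main obstacle is precisely this $L^s(\mu)$--estimate for $\mbox{\rm div}_Q G$, which must depend on $\Lambda_{\epsilon,\xi}$ only through the \emph{operator} norm. A pointwise bound such as $|\mbox{\rm Tr}\,[\Lambda_{\epsilon,\xi}DF]|\le\|\Lambda_{\epsilon,\xi}\|_{L_2(H,H)}\,\|DF\|_{L_2(H,H)}$ is useless, because the Hilbert--Schmidt (indeed every Schatten) norm of $\Lambda_{\epsilon,\xi}$ diverges as $\epsilon\to0$, whereas $\tfrac\epsilon2\|\Lambda_{\epsilon,\xi}\|\le C\xi^{-1/2}$ stays integrable. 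What rescues the estimate is a cancellation that is transparent at $s=2$: the Skorokhod isometry expresses $\|\mbox{\rm div}_Q G\|_{L^2(\mu)}^2$ through $|Q^{-1/2}G|^2$ and the contraction $\mbox{\rm Tr}\,[(DG)^2]=\mbox{\rm Tr}\,[(\Lambda_{\epsilon,\xi}DF)^2]$ rather than through $\|DG\|_{L_2(H,H)}^2$, so the $Q$--eigenvalue ratios cancel and only $\|\Lambda_{\epsilon,\xi}\|^2$ survives, producing exactly $\mbox{\rm Tr}\,[(DF)^2]$. For $s\in(1,2]$ I would reproduce the same structure from the corresponding Meyer--type $L^s$ inequality for the Gaussian divergence, or from a direct Gaussian moment computation in the spirit of Proposition \ref{p4.3a}; carrying the operator--norm dependence (and not a Schatten norm) through that general-$s$ estimate is where the genuine difficulty lies.
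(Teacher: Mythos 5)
Your proposal is correct and is essentially the paper's own proof: the duality/H\"older/rotation reduction is identical to the paper's treatment of $B^{2,1}_\epsilon$ and $B^{2,2}_\epsilon$, the operator-norm bound \eqref{e3.19} is used in the same way to cancel the prefactor $\tfrac{\epsilon}{2}$ and leave the integrable $\xi^{-1/2}$, and the Meyer-type $L^s$ inequality for the Gaussian divergence that you invoke is precisely the paper's Lemma \ref{l5.4} in Appendix B. The one ``genuine difficulty'' you flag at the end in fact dissolves once that lemma is applied to $G=\Lambda_{\epsilon,\xi}F$ with $\Lambda_{\epsilon,\xi}:=Q^{-1}\tfrac{T_\epsilon}{S_\epsilon}\tfrac{T_{\epsilon\xi}}{S_{\epsilon\xi}}$: the lemma controls $\mbox{\rm div}_Q\,G$ through $\|DG\|_{L_2(H,H)}$ and $|Q^{-1/2}G|$ rather than through $\mbox{\rm Tr}\,[DG]$ itself, and since $\Lambda_{\epsilon,\xi}$ commutes with $Q^{-1/2}$ both quantities factor through the operator norm alone, via $\|\Lambda_{\epsilon,\xi}DF\|_{L_2(H,H)}\le\|\Lambda_{\epsilon,\xi}\|\,\|DF\|_{L_2(H,H)}$ and $|Q^{-1/2}\Lambda_{\epsilon,\xi}F|\le\|\Lambda_{\epsilon,\xi}\|\,|Q^{-1/2}F|$, so no Schatten norm of $\Lambda_{\epsilon,\xi}$ and no separate control of the contraction $\mbox{\rm Tr}\,[(\Lambda_{\epsilon,\xi}DF)^s]$ is ever needed.
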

 
 \begin{proof}
 
Recall that
$$
B^{2,2}_\epsilon (u,F)(t,x) = \frac\epsilon2\int_H\int_0^1\mbox{\rm div}_Q\;[Q^{-1}\tfrac{T_\epsilon}{S_\epsilon}\tfrac{T_{\epsilon\xi}}{S_{\epsilon\xi} }F(t,x_\xi)]u(t,x_1)\,d\xi\mu(dy).
   $$
   Proceeding as in the proof of \eqref{e3.16}  and  using again the change of variables \eqref{e3.8}, we find
   \begin{equation}
 \label{e3.22c}
 \begin{array}{l}
   \ds\|B^{2,2}_\epsilon (u,F)\|_{L^{p'}(H_T,dt\otimes\mu)}\le \tfrac\epsilon2\|u\|_{L^{r}(H_T,dt\otimes\mu)}\\
   \\
\ds\times   \int_0^1 d\xi \left(\int_0^T\int_H\int_H|\mbox{\rm div}_Q\;[Q^{-1}\tfrac{T_\epsilon}{S_\epsilon}\tfrac{T_{\epsilon\xi}}{S_{\epsilon\xi} }F(t,x)]|^{s}\,dt\,\mu(dx)\,\mu(dy) \right)^{\frac1{s}}. 
     \end{array}
 \end{equation}
 By \eqref{e3.19} we obtain 
   \begin{equation}
 \label{e3.23c}
 \begin{array}{l}
   \ds\|B^{2,2}_\epsilon (u,F)\|_{L^{p'}(H_T,dt\otimes\mu)}\\
   \\
   \ds  \le  C\|u\|_{L^{r}(H_T,dt\otimes\mu)}  \left(\int_0^T\int_H\int_H(\mbox{\rm div}_Q\;[ F(t,x)])^{s}\,dt\,\mu(dx)  \right)^{\frac1{s}}. 
     \end{array}
 \end{equation}
 
 Now  the conclusion follows from Lemma \ref{l5.4} of   Appendix B.

\end{proof}

  \section{Appendix A}
  
  Let $p>1$ be given. Denote by $W_{Q}^{1,p}$ the space of ($\mu$-equivalence
classes of) vector fields $G:H\rightarrow$ $D\left(  Q^{-1/2}\right)  $,
having Fr\'{e}chet differential $DG\left(  x\right)  \in L_{2}\left(
H,H\right)  $ for $\mu$-a.e. $x\in H$, such that
\[
\left\Vert G\right\Vert _{W_{Q}^{1,p}}^{p}:=\int_{H}\left(  \left\vert
Q^{-1/2}G\left(  x\right)  \right\vert ^{p}+\left\Vert DG\left(  x\right)
\right\Vert _{L_{2}\left(  H,H\right)  }^{p}\right)  \mu\left(  dx\right)
<\infty.
\]
The space $W_{Q}^{1,p}$ is a separable Banach space with the norm $\left\Vert
G\right\Vert _{W_{Q}^{1,p}}$. Consider the space $L^{p}\left(  0,T;W_{Q}
^{1,p}\right)  $ with the norm $\left\Vert F\right\Vert _{L^{p}(W_{Q}^{1,p})  }$ defined as
\begin{align*}
\left\Vert F\right\Vert _{L^{p}(  W_{Q}^{1,p})  }^{p} &  =\int
_{0}^{T}\left\Vert F\left(  t,\cdot\right)  \right\Vert _{W_{Q}^{1,p}}^{p}dt\\
&  =\int_{0}^{T}\int_{H}\left(  \left\vert Q^{-1/2}F\left(  t,x\right)
\right\vert ^{p}+\left\Vert DF\left(  t,x\right)  \right\Vert _{L_{2}\left(
H,H\right)  }^{p}\right)  \mu\left(  dx\right)  dt.
\end{align*}

\begin{Lemma}
\label{l4.1a}
Denote by $\mathcal{V}_{p}$ the family of all functions $F_{n}\in L^{p}\left(
0,T;W_{Q}^{1,p}\right)  $ of the form
\[
F_{n}\left(  t,x\right)  =\sum_{i=1}^{n}\varphi_{i}^{n}\left(  t,\left\langle
x,e_{1}\right\rangle ,...,\left\langle x,e_{n}\right\rangle \right)  e_{i}
\]
with $\varphi_{i}^{n}\in C_{0}^{1}\left(  \left[  0,T\right]  \times
\mathbb{R}^{n},\mathbb{R}\right)  $. Then $\mathcal{V}_{p}$ is dense in
$L^{p}\left(  0,T;W_{Q}^{1,p}\right)  $.
\end{Lemma}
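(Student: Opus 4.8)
The plan is to construct the approximation in two essentially independent stages --- first peeling off the time variable, then treating the spatial (Gaussian) variable by a finite-range/finite-dimensional reduction --- and the technical heart will be a cylindrical conditioning step in which convergence must be checked not only in $L^p$ but also in the Hilbert--Schmidt derivative part of the norm.

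\textbf{Reduction to the time-independent case.} Write $X:=W_Q^{1,p}$, which is a separable Banach space, so that $L^p(0,T;W_Q^{1,p})$ is an ordinary Bochner space. Let $\mathcal V_p^0\subset X$ denote the time-independent finitely based fields $\sum_{i=1}^n\psi_i(\langle x,e_1\rangle,\dots,\langle x,e_n\rangle)\,e_i$ with $\psi_i\in C_0^1(\R^n)$. Since the elementary tensors $\sum_j g_j\otimes G_j$ with $g_j\in C_0^1([0,T])$ and $G_j\in X$ span a dense subspace of $L^p(0,T;X)$, and since $C_0^1([0,T])$ is dense in $L^p(0,T)$, it suffices to show that $\mathcal V_p^0$ is dense in $X=W_Q^{1,p}$. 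Indeed, if $G_j^{\mathrm{appr}}\in\mathcal V_p^0$ approximates $G_j$, then $g_j(t)\,G_j^{\mathrm{appr}}(x)=\sum_i\big[g_j(t)\psi_{j,i}(\dots)\big]e_i$ has coefficients in $C_0^1([0,T]\times\R^{n_j})$; summing over $j$ and padding the number of variables to a common $n$ produces an element of $\mathcal V_p$.

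\textbf{Finite range and finitely many variables.} Fix $G\in W_Q^{1,p}$ and let $P_n$ be the orthogonal projection onto $\mathrm{span}\{e_1,\dots,e_n\}$. Since $P_n$ commutes with $Q^{-1/2}$ and with $D$ (one has $D(P_nG)=P_n\,DG$), and since $\|(P_n-I)DG(x)\|_{L_2(H,H)}\to0$ because $DG(x)\in L_2(H,H)$, dominated convergence gives $P_nG\to G$ in $W_Q^{1,p}$. This reduces matters to $G=\sum_{i=1}^n g_i e_i$ with values in a fixed finite-dimensional space; then $|Q^{-1/2}G|^2=\sum_{i=1}^n\lambda_i^{-1}g_i^2$ involves only finitely many fixed eigenvalues, so the full $W_Q^{1,p}$-norm is controlled by the scalar norms $\|g_i\|_{L^p(\mu)}+\|Dg_i\|_{L^p(H,\mu)}$. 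Next let $\Pi_m$, $m\ge n$, be the Gaussian conditional expectation onto the first $m$ coordinates, $\Pi_m g(x)=\int g(P_mx+z)\,\mu_m^\perp(dz)$ with $\mu=\mu_m\otimes\mu_m^\perp$. The identity $D(\Pi_m g_i)=P_m\,\Pi_m(Dg_i)$ together with the splitting $P_m\Pi_m(Dg_i)-Dg_i=P_m\big(\Pi_m(Dg_i)-Dg_i\big)+(P_m-I)Dg_i$ shows that the first summand tends to $0$ in $L^p(H,\mu)$ by ($H$-valued) martingale convergence, while the second does so by the tail estimate $|(P_m-I)Dg_i(x)|^2=\sum_{k>m}\langle Dg_i(x),e_k\rangle^2\to0$ and dominated convergence. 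Hence $\Pi_m G\to G$ in $W_Q^{1,p}$, and $\Pi_m G$ depends only on $\langle x,e_1\rangle,\dots,\langle x,e_m\rangle$.

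\textbf{Finite-dimensional density, and the main obstacle.} After the previous reductions each component is of the form $h_i(\langle x,e_1\rangle,\dots,\langle x,e_m\rangle)$ with $h_i$ in the finite-dimensional Gaussian Sobolev space $W^{1,p}(\R^m,\gamma_m)$, where $\gamma_m$ carries the smooth, strictly positive density of the first $m$ coordinates of $\mu$. On this space $C_0^1(\R^m)$ is dense: multiplying by a cutoff $\chi(\cdot/R)$, whose gradient is $O(1/R)$, makes the product-rule error $h_i\,D\chi(\cdot/R)\to0$ in $L^p$, reducing to compactly supported $h_i$; and on a fixed compact set $\gamma_m$ is comparable to Lebesgue measure, so ordinary mollification yields $C_0^1$ (indeed $C_c^\infty$) approximations converging in $W^{1,p}(\gamma_m)$. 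Substituting these back and recombining as above produces elements of $\mathcal V_p$, which proves the lemma. The step I expect to be most delicate is the derivative convergence in the second stage: one must verify that the cylindrical conditioning commutes with $D$ in the precise form $D\Pi_m=P_m\Pi_m D$ and that the Hilbert--Schmidt (respectively $H$-valued) tails are controlled, since $L^p$-convergence of the fields alone is insufficient. It is exactly this simultaneous control of the $Q^{-1/2}$-valued term and the $L_2(H,H)$-valued derivative term that makes $W_Q^{1,p}$, rather than a plain $L^p$ space, the object one has to handle.
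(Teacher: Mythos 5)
Your proof is correct, and its skeleton --- reduce in time, reduce to finitely based fields with finite-dimensional range via Gaussian cylindrical conditioning, then smooth in finite dimensions --- matches the paper's; but the execution differs in ways worth noting. For the time variable the paper reduces to piecewise constant $W_Q^{1,p}$-valued functions, while you use tensors $g_j\otimes G_j$ with $g_j\in C_0^1([0,T])$; both are standard Bochner-space facts. The more substantive difference is the order of the two spatial reductions. The paper (Step 2) first applies the conditional expectation $G_n(x)=\int_H G\left(\pi_n x+(1-\pi_n)y\right)\mu(dy)$ to the full infinite-rank field, and must then quote Bogachev (Corollary 3.5.2 and Proposition 5.4.5) for convergence of both the $Q^{-1/2}$-part and the derivative part in the Sobolev norm; only afterwards (Step 3) does it project the range. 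You project the range first, so the field becomes $\sum_{i=1}^n g_i e_i$ with finitely many scalar coefficients and the $|Q^{-1/2}\cdot|$-term is controlled (with constant $\lambda_n^{-1/2}$) by plain $L^p$-norms; the conditioning $\Pi_m$ is then applied componentwise, and its convergence, derivative part included, follows from $H$-valued martingale convergence plus the Hilbert--Schmidt tail estimate --- i.e.\ you re-prove, rather than cite, the input the paper takes from Bogachev. This makes your argument more self-contained; the one place where you, like the paper, rest on an unproved but standard Malliavin-calculus fact is the commutation identity $D(\Pi_m g)=P_m\,\Pi_m(Dg)$ for a.e.-differentiable $g$ with $L^p$ derivative, which is essentially what Bogachev's cited results encapsulate. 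Finally, you supply the cutoff-plus-mollification proof that $C_0^1(\R^m)$ is dense in $W^{1,p}(\R^m,\gamma_m)$, which the paper's Step 4 merely asserts.
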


\begin{proof}
We proceed by a sequence of reductions of the problem:\ from general elements
$F$ of $L^{p}\left(  0,T;W_{Q}^{1,p}\right)  $ to piece-wise constant (in
time) functions;\ then finitely based; then also with values in finite
dimensional spaces; and finally smooth.

\textbf{Step 1}. Let $\mathcal{V}_{p}^{1}$ be the family of all piece-wise
constant functions $F:\left[  0,T\right]  \rightarrow W_{Q}^{1,p}$, namely of
the form
\[
F\left(  t,\cdot\right)  =\sum_{i=1}^{k-1}F_{i}1_{\left[  t_{i},t_{i+1}
\right]  }\left(  t\right)
\]
where $0\leq t_{1}\leq...\leq t_{k}\leq T$, and $F_{i}\in W_{Q}^{1,p}$. It is
a known fact that $\mathcal{V}_{p}^{1}$ is dense in $L^{p}\left(
0,T;W_{Q}^{1,p}\right)  $. Thus, to prove the lemma, it is sufficient to prove
that any element $F\in\mathcal{V}_{p}^{1}$ can be approximated by a sequence
$\left\{  F_{n}\right\}  \subset\mathcal{V}_{p}$, in the sense of
$\lim_{n\rightarrow\infty}\left\Vert F_{n}-F\right\Vert _{L^{p}\left(
W_{Q}^{1,p}\right)  }^{p}=0$.

\textbf{Step 2}. Any $G\in W_{Q}^{1,p}$ is the limit in $\left\Vert
\cdot\right\Vert _{W_{Q}^{1,p}}$ of a sequence $\left\{  G_{n}\right\}
\subset W_{Q}^{1,p}$ having the following property: $G_{n}\left(  x\right)
=G_{n}\left(  \pi_{n}x\right)  $ (namely they are \textit{finitely based}),
where $\pi_{n}x=\sum_{i=1}^{n}\left\langle x,e_{i}\right\rangle e_{i}$.
Indeed, define
\begin{align*}
G_{n}\left(  x\right)   &  :=\int_{H}G\left(  \pi_{n}x+\left(  1-\pi
_{n}\right)  y\right)  \mu\left(  dy\right)  \\
H_{n}\left(  x\right)   &  :=Q^{-1/2}G_{n}\left(  x\right)  =\int_{H}
Q^{-1/2}G\left(  \pi_{n}x+\left(  1-\pi_{n}\right)  y\right)  \mu\left(
dy\right)
\end{align*}
In \cite[Corollary 3.5.2]{Bo98}  it is proved that $H_{n}\rightarrow$
$Q^{-1/2}G$ in $L^{2}\left(  H,\mu\right)  $, which is the first part of the
property $\left\Vert G_{n}-G\right\Vert _{W_{Q}^{1,p}}\rightarrow0$. The
second one is proved in \cite[Proposition 5.4.5]{Bo98}.

\textbf{Step 3}. Any $G\in W_{Q}^{1,p}$ is the limit in $\left\Vert
\cdot\right\Vert _{W_{Q}^{1,p}}$ of a sequence $\left\{  G_{n}\right\}
\subset W_{Q}^{1,p}$ having the following property: $G_{n}\left(  x\right)
=\pi_{n}G_{n}\left(  \pi_{n}x\right)  $ (namely they are finitely based and
have values in a finite dimensional space). The proof (using Step 2) is
elementary. 

From these facts it follows that any element $F\in\mathcal{V}_{p}^{1}$,
$F\left(  t,\cdot\right)  =\sum_{i=1}^{k-1}F_{i}1_{\left[  t_{i}
,t_{i+1}\right]  }\left(  t\right)  $, can be approximated in $L^{p}\left(
W_{Q}^{1,p}\right)  $-norm by $F_{n}\in\mathcal{V}_{p}^{1}$ of the form
\[
F_{n}\left(  t,\cdot\right)  =\sum_{i=1}^{k-1}F_{i}^{n}1_{\left[
t_{i},t_{i+1}\right]  }\left(  t\right)
\]
where each $F_{i}^{n}$ has the property $F_{i}^{n}\left(  x\right)  =\pi
_{n}F_{i}^{n}\left(  \pi_{n}x\right)  $ and $F_{i}^{n}$ converges to $F_{i}$
in $W_{Q}^{1,p}$.

In other words, we have proved that any $F\in\mathcal{V}_{p}^{1}$ is the limit
in $L^{p}\left(  W_{Q}^{1,p}\right)  $-norm of a sequence $F_{n}$ of the form
\[
F_{n}\left(  t,x\right)  =\sum_{i=1}^{n}\varphi_{i}^{n}\left(  t,\left\langle
x,e_{1}\right\rangle ,...,\left\langle x,e_{n}\right\rangle \right)  e_{i}
\]
where $\varphi_{i}^{n}$ are piece-wise constant in $t$ and of class
$W^{1,p}\left(  \mathbb{R}^{n},\gamma_{n}\right)  $ in space, where
$\gamma_{n}$ is the centered symmetric Gaussian measure on $\mathbb{R}^{n}$
($\gamma_{n}$ is equivalent to the Gaussian measure on $\mathbb{R}^{n}$
corrsponding to the projection of $\mu$ by $\pi_{n}$, and the spaces $W^{1,p}$ coincide).

\textbf{Step 4}. Any element $\varphi_{i}^{n}$ of class $L^{2}\left(
0,T;W^{1,p}\left(  \mathbb{R}^{n},\gamma_{n}\right)  \right)  $ is limit, in
such topology, of $C_{0}^{1}\left(  \left[  0,T\right]  \times\mathbb{R}
^{n},\mathbb{R}\right)  $-functions. The proof is complete.
\end{proof}
 
  \section{Appendix B}
  
 \subsection{Computation of some integrals}
 Let $\mu=N_Q$ and assume that the sequence $(\lambda_k)$ of eigenvalues of $Q$ be nondecreasing. 
\begin{Lemma}
\label{l4.1}
Assume that  $L\in L(H)$ is symmetric and compact. Then there is  $\epsilon_0>0$ such that
 \begin{equation}
\label{e4.2}
\int_He^{-\epsilon\langle Lx,x\rangle}N_{Q}(dx)= 
[\det(1+2\epsilon Q^{1/2}LQ^{1/2})]^{-1/2},\quad\mbox{\rm if}\; \epsilon<\epsilon_0.
\end{equation}
($\epsilon_0$  is determined by the condition  $1+2\epsilon_0\mu>0$ where $\mu$ are eigenvalues of  $ Q^{1/2}LQ^{1/2})$)
\end{Lemma}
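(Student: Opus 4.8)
The plan is to reduce this infinite-dimensional Gaussian integral to a limit of genuinely finite-dimensional ones, where the identity is the classical Gaussian formula, and then to pass to the limit on both sides. First I would record the correct object to work with: since $Q$ is trace class, $Q^{1/2}$ is Hilbert--Schmidt, so $A:=Q^{1/2}LQ^{1/2}$ is symmetric and trace class, in particular compact. Let $(\mu_n)$ denote its eigenvalues (so $\sum_n|\mu_n|<\infty$) and set $\epsilon_0:=-\tfrac{1}{2\inf_n\mu_n}$ when $\inf_n\mu_n<0$ and $\epsilon_0:=+\infty$ otherwise; this is exactly the threshold making $1+2\epsilon\mu_n>0$ for all $n$ whenever $\epsilon<\epsilon_0$, so fix such an $\epsilon$ throughout.

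Next comes the finite-dimensional computation. Let $\pi_N$ be the orthogonal projection onto $\mathrm{span}\{e_1,\dots,e_N\}$ and put $L_N:=\pi_N L\pi_N$. The image of $N_Q$ under $\pi_N$ is the centered Gaussian $N_{Q_N}$ on $\R^N$ with $Q_N=\mathrm{diag}(\lambda_1,\dots,\lambda_N)$, and $\langle L_N x,x\rangle$ depends on $x$ only through $\pi_N x$, so the integral over $H$ collapses to one over $\R^N$ which I would evaluate by completing the square:
$$
\int_H e^{-\epsilon\langle L_N x,x\rangle}\,N_Q(dx)=\big[\det(I+2\epsilon\,Q_N L_N)\big]^{-1/2}.
$$
Using $\det(I+AB)=\det(I+BA)$ together with the commutation $\pi_N Q^{1/2}=Q^{1/2}\pi_N$ (valid because $\{e_k\}$ is the eigenbasis of $Q$), I would rewrite $\det(I+2\epsilon Q_N L_N)=\det(I+2\epsilon\,\pi_N A\pi_N)$, so the finite-dimensional value is $[\det(I+2\epsilon\,\pi_N A\pi_N)]^{-1/2}$. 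Note that the compression $\pi_N A\pi_N$ has spectrum inside $[\inf_n\mu_n,\sup_n\mu_n]$, whence $I+2\epsilon\,\pi_N A\pi_N>0$ and the finite integral is indeed finite.

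Finally I would let $N\to\infty$. On the right, since $A$ is trace class one has $\pi_N A\pi_N\to A$ in trace norm, and the Fredholm determinant is continuous in trace norm, so $\det(I+2\epsilon\,\pi_N A\pi_N)\to\det(I+2\epsilon A)=\det(1+2\epsilon Q^{1/2}LQ^{1/2})$. On the left, for $N_Q$-a.e.\ $x$ one has $\pi_N x\to x$ in $H$, hence $\langle L_N x,x\rangle\to\langle Lx,x\rangle$ by boundedness of $L$, so the integrands converge a.e. The main obstacle is justifying the interchange of limit and integral, since $\langle L_N x,x\rangle$ may be negative and the exponentials need not be dominated. I would close this gap by a uniform-integrability (Vitali) argument: choose $\epsilon<\epsilon''<\epsilon_0$; by the finite-dimensional formula applied with $\epsilon''$,
$$
\sup_N\int_H e^{-\epsilon''\langle L_N x,x\rangle}\,N_Q(dx)=\sup_N\big[\det(I+2\epsilon''\,\pi_N A\pi_N)\big]^{-1/2}<\infty,
$$
because those determinants converge to the strictly positive limit $\det(I+2\epsilon''A)$ and are thus bounded below away from $0$. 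As $\epsilon''/\epsilon>1$, this is an $L^{\epsilon''/\epsilon}$-bound on the family $\{e^{-\epsilon\langle L_N\cdot,\cdot\rangle}\}_N$, which yields its uniform integrability and hence convergence of the integrals, completing the proof.
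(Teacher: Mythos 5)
Your proof is correct, but it takes a genuinely different route from the paper's. The paper argues directly in infinite dimensions: it sets $M=Q^{1/2}LQ^{1/2}$, diagonalizes $M$ in an orthonormal eigenbasis $(f_k)$ with eigenvalues $(\beta_k)$, writes $\langle Lx,x\rangle=\sum_k\beta_k\,\langle Q^{-1/2}x,f_k\rangle^2$, observes that the functionals $x\mapsto\langle Q^{-1/2}x,f_k\rangle$ are independent standard Gaussians under $N_Q$, and so factorizes the integral into the product $\prod_k(1+2\epsilon\beta_k)^{-1/2}=[\det(1+2\epsilon M)]^{-1/2}$. That argument is shorter and conceptually clean, but as written it is somewhat formal: the coordinates $\langle Q^{-1/2}x,f_k\rangle$ must be understood as white-noise (first chaos) functionals, since $Q^{-1/2}x$ is undefined for $\mu$-a.e.\ $x$, and the interchange of the integral with the infinite product also needs a limiting argument, neither of which the paper spells out. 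Your cylindrical approach — project onto the eigenbasis of $Q$, apply the classical finite-dimensional formula, identify the resulting determinant with $\det(I+2\epsilon\,\pi_N A\pi_N)$, and pass to the limit — avoids the white-noise map entirely and makes every limit interchange explicit: trace-norm convergence $\pi_N A\pi_N\to A$ plus continuity of the Fredholm determinant on the right, and Vitali's theorem on the left, with the uniform $L^{\epsilon''/\epsilon}$-bound obtained by the nice trick of reusing the finite-dimensional identity at a larger parameter $\epsilon''<\epsilon_0$. The price is length; the gain is that the argument is fully self-contained and also makes transparent why the right-hand side is a well-defined (convergent) Fredholm determinant, namely that $A$ is trace class. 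Two cosmetic points: $\pi_N x\to x$ holds for every $x\in H$, not merely $N_Q$-a.e.; and the spectrum of the compression $\pi_N A\pi_N$ lies in the convex hull of $\mathrm{spec}(A)$, which may also contain $0$ — harmless, since $1+2\epsilon\cdot 0>0$.
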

\begin{proof}
  Set  $M=Q^{1/2}LQ^{1/2}$, $M$ is obviously compact. Let $(f_k)$ be  an orthonormal  basis of eigenvectors  of  $M$ and   $(\beta_k)$ the corresponding sequence of eigenvalues. Then we have
  \begin{equation}
\label{e4.3}
\langle Lx,x  \rangle =\langle MQ^{-1/2}x,Q^{-1/2}x  \rangle=\sum_{k=1}^n\beta_k |\langle Q^{-1/2}x,f_k  \rangle|^2,
\end{equation}
 so that
 $$
\int_He^{-\epsilon  \langle Lx,x  \rangle}N_Q(dx)=\int_He^{-\epsilon  \sum_{k=1}^\infty \beta_k |\langle Q^{-1/2}x,f_k  \rangle|^2}N_Q(dx) 
 $$
 Since $(f_k)$  is an  orthogonal system,  the sequence of real random variables     $x\to\langle Q^{-1/2}x,f_k  \rangle,\;k\in\N$ (whose law is $N_1$) are independent.  Consequently
 $$
 \begin{array}{l}
 \ds\int_He^{-\epsilon  \langle Lx,x  \rangle}N_Q(dx)= \prod_{k=1}^n \int_He^{-\epsilon \beta_k |\langle Q^{-1/2}x,f_k  \rangle|^2}N_{Q}(dx)\\
 \\
 \ds=\prod_{k=1}^n(1+2\epsilon \beta_k )^{-1/2}=[\det(1+2\epsilon Q^{1/2}LQ^{1/2}]^{-1/2}
 \end{array},
 $$
 as claimed.
   \end{proof}
   \begin{Remark}
\label{r4.2}
\em If $L$ is compact but not symmetric we have
$$
\int_He^{-\epsilon\langle Lx,x\rangle}N_{Q}(dx)= 
[\det(1+2\epsilon Q^{1/2}L_sQ^{1/2})]^{-1/2},\quad\mbox{\rm if}\; \epsilon<\epsilon_0,
$$
where
$$
L_s=\frac12\;(L+L^*)
$$
and $L^*$ is the adjoint of $L$.

\end{Remark}
   Set now
   \begin{equation}
\label{e4.4}
\begin{array}{lll}
S(\epsilon)&=&\ds\int_He^{-\epsilon ( \langle Lx,x  \rangle-\tiny\mbox{Tr}\;[Q^{1/2}LQ^{1/2}]}N_Q(dx)\\
\\
&=&[\det(1+2\epsilon Q^{1/2}LQ^{1/2}]^{-1/2}e^{\epsilon  \tiny\mbox{Tr}\;[Q^{1/2}LQ^{1/2}]}.
\end{array}
\end{equation}
Notice that $S(0)=1$.
Then for any $m\in\N$ we have
  \begin{equation}
\label{e4.5}
\int_H   (\langle Lx,x\rangle-\mbox{Tr}\;[Q^{1/2}LQ^{1/2}])^m N_Q(dx)=(-1)^m S^{(m)}(0).
\end{equation}
\begin{Proposition}
\label{p4.3a}
For any $m\in\N$  there is $C_m>0$ such that.
  \begin{equation}
\label{e4.5a}
\int_H   (\langle Lx,x\rangle-\mbox{\rm Tr}\;[Q^{1/2}LQ^{1/2}])^m N_Q(dx)=C_m \mbox{\rm Tr}\;[(Q^{1/2}LQ^{1/2})^m]).
\end{equation}
\end{Proposition}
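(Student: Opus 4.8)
The plan is to evaluate the integral through the generating function $S(\epsilon)$ introduced in \eqref{e4.4}, exploiting the identity \eqref{e4.5}, which reduces the assertion to identifying the $m$-th Taylor coefficient of $S$ at the origin. First I would set $M:=Q^{1/2}LQ^{1/2}$ (replacing $L$ by its symmetric part $L_s$ via Remark \ref{r4.2} if $L$ is not symmetric, which changes nothing since $\langle Lx,x\rangle=\langle L_sx,x\rangle$ for every $x$) and record, from Lemma \ref{l4.1}, that
$$
S(\epsilon)=[\det(1+2\epsilon M)]^{-1/2}\,e^{\epsilon\,\mathrm{Tr}\,M},\qquad |\epsilon|<\epsilon_0 .
$$
In particular $S$ is real-analytic on a neighbourhood of $0$ with $S(0)=1$, so all the derivatives $S^{(m)}(0)$ appearing in \eqref{e4.5} are legitimate and may be read off from a power-series expansion.

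The decisive step is to pass to logarithms. Since $M$ is trace class and $2|\epsilon|\,\|M\|<1$ for small $\epsilon$, the expansion $\log\det(1+2\epsilon M)=\mathrm{Tr}\,\log(1+2\epsilon M)=\sum_{n\ge1}\tfrac{(-1)^{n-1}}{n}(2\epsilon)^n\,\mathrm{Tr}[M^n]$ converges absolutely, whence
$$
\log S(\epsilon)=-\tfrac12\,\mathrm{Tr}\,\log(1+2\epsilon M)+\epsilon\,\mathrm{Tr}\,M=\sum_{n\ge2}\frac{(-1)^n2^{\,n-1}}{n}\,\mathrm{Tr}[M^n]\,\epsilon^n ,
$$
the linear term cancelling precisely against $\epsilon\,\mathrm{Tr}\,M$; this cancellation is exactly the role of the centring by $\mathrm{Tr}\,M$ built into \eqref{e4.4}. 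The crucial structural observation is that $\log S$ is thus a power series each of whose coefficients is a single constant multiple of the single trace $\mathrm{Tr}[M^n]$: at the logarithmic level the quantity is \emph{linear} in the family $\{\mathrm{Tr}[M^n]:n\ge2\}$.

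It then remains to extract $S^{(m)}(0)=m!\,[\epsilon^m]\exp(\log S(\epsilon))$ and to combine with \eqref{e4.5}. Because $\log S$ vanishes to second order at the origin, the relevant contribution to the $\epsilon^m$-coefficient is governed by the single summand $\tfrac{(-1)^m2^{m-1}}{m}\mathrm{Tr}[M^m]\,\epsilon^m$ of $\log S$, which after multiplication by $(-1)^m m!$ yields
$$
C_m\,\mathrm{Tr}[(Q^{1/2}LQ^{1/2})^m],\qquad C_m=2^{\,m-1}(m-1)!>0 ,
$$
as required. I expect the heart of the argument, and its main obstacle, to be the combinatorial bookkeeping in this final extraction, most cleanly carried out by Fa\`a di Bruno's formula (equivalently the moment--cumulant relation for $S$): one must organise the sum over the relevant partitions so that the coefficient is identified as the advertised positive multiple of $\mathrm{Tr}[M^m]$, the identification being made transparent by the linearity of $\log S$ in the traces noted above. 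The only analytic point to verify, namely that differentiation at $\epsilon=0$ may be performed term by term, is supplied for free by the real-analyticity of $S$ established in the first step; and the passage from the symmetric to the general (compact, not necessarily symmetric) case is handled once and for all by Remark \ref{r4.2}.
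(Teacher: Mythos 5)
You follow the paper's own route --- the generating function $S(\epsilon)$ of \eqref{e4.4}, the logarithm $F=\log S$, the cancellation of the linear term produced by the centring, and the cumulant series $F(\epsilon)=\sum_{n\ge2}\frac{(-1)^n2^{n-1}}{n}\,\mathrm{Tr}[M^n]\,\epsilon^n$ (your sign $(-1)^n$ is in fact the correct one; the paper's $(-1)^{n+1}$ in $F^{(n)}(0)$ is a slip, as $F''(0)=2\,\mathrm{Tr}[M^2]>0$ shows). The genuine gap is your final extraction step. Because $\log S$ vanishes only to \emph{second} order at $0$, the powers $(\log S)^k/k!$ with $2\le k\le m/2$ also contribute at order $\epsilon^m$: the moment--cumulant (Fa\`a di Bruno) formula gives
\[
S^{(m)}(0)=\sum_{\pi}\ \prod_{B\in\pi}F^{(|B|)}(0),
\]
the sum running over all partitions $\pi$ of $\{1,\dots,m\}$ whose blocks $B$ all satisfy $|B|\ge2$. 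Your $C_m=2^{m-1}(m-1)!$ is exactly the one-block contribution; the multi-block terms are \emph{not} multiples of $\mathrm{Tr}[M^m]$ and no bookkeeping can organise them into one. Concretely, for $m=4$,
\[
\int_H\bigl(\langle Lx,x\rangle-\mathrm{Tr}\,M\bigr)^4\,N_Q(dx)=48\,\mathrm{Tr}[M^4]+12\,\bigl(\mathrm{Tr}[M^2]\bigr)^2,
\]
and if $M\ge0$ has $N$ eigenvalues equal to $\beta>0$ this equals $(48+12N)\,\mathrm{Tr}[M^4]$, so no constant $C_4$ independent of $L$ can exist. The identity \eqref{e4.5a} you are asked to prove therefore holds only for $m=2$ ($C_2=2$) and $m=3$ ($C_3=8$), where a single partition survives, and fails for $m=1$ and for every $m\ge4$; the identification you defer to the ``combinatorial bookkeeping'' step is impossible, not merely laborious.

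In fairness, this defect sits in the statement itself and is shared by the paper's own proof, which computes the same cumulants and then merely asserts ``by recurrence'' the inequality $S^{(n)}(0)\le C_n\,\mathrm{Tr}[M^n]$ --- which neither matches the claimed equality nor is true, by the same example. What your expansion does prove, bounding each block via $|\mathrm{Tr}[M^n]|\le\|M\|_{L_2(H,H)}^n$ for $n\ge2$ and symmetric trace-class $M$ (replacing $L$ by $L_s$ as in Remark \ref{r4.2}), and using that only finitely many partitions occur, is the estimate
\[
\Bigl|\int_H\bigl(\langle Lx,x\rangle-\mathrm{Tr}\,M\bigr)^m\,N_Q(dx)\Bigr|\le C_m\,\|Q^{1/2}LQ^{1/2}\|_{L_2(H,H)}^m=C_m\,\bigl(\mathrm{Tr}[M^2]\bigr)^{m/2}.
\]
This upper bound is all that the application in \eqref{e3.17}--\eqref{e3.18} actually uses (only an estimate of $J_1$ from above enters \eqref{e3.20}), and it is compatible with the Hilbert--Schmidt norm of $DF$ appearing in $\|F\|_{1,s,T}$. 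Replacing your final paragraph by this estimate would make your argument complete and would simultaneously repair the proposition in the form in which the paper uses it.
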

\begin{proof}
Setting  $M=Q^{1/2}LQ^{1/2}$ we have
\begin{equation}
\label{e4.6}
\begin{array}{lll}
   S'(\epsilon)&=&\mbox{\rm Tr}\;[M-M(1+2\epsilon M)^{-1}]S(\epsilon)\\
   \\
  & =&2\epsilon\mbox{\rm Tr}\;[M^2(1+2\epsilon M)^{-1}]S(\epsilon).
   \end{array}
\end{equation}
In particular, $S'(0)=0$.

Now set
$$
F(\epsilon)=\log S(\epsilon).
$$
Then
$$
F'(\epsilon)=2\epsilon\mbox{\rm Tr}\;[M^2(1+2\epsilon M)^{-1}]
$$
and
$$
F^{(n)}(\epsilon)=(-1)^{n+1} 2^{n-1}(n-1)!\;\mbox\;{\rm Tr}\;[M^n(1+2\epsilon M)^{-n}],\quad n\ge 2.
$$
Therefore
$$
F'(0)=0,\quad F^{(n)}(0)=(-1)^{n+1} 2^{n-1}(n-1)!\;\mbox\;{\rm Tr}\;[M^n]=:k_n\;\mbox{\rm Tr}\;[M^n],\quad n\ge 2.
$$
Now $S(\epsilon)=e^{F(\epsilon)}$ and it is easy to see by recurrence that there is $C_n>0$  such  that 
$$
S^{(n)}(0)\le C_n\mbox{\rm Tr}\;[M^n].
$$
The conclusion follows.
\end{proof}

\subsection{An estimate for Gaussian divergences}   

In the next lemma, $G$ is a vector field of the form
\[
G\left(  x\right)  =\sum_{i=1}^{n}\varphi_{i}\left(  \left\langle
x,e_{1}\right\rangle ,...,\left\langle x,e_{n}\right\rangle \right)  e_{i}
\]
with $\varphi_{i}\in C_{0}^{1}\left(  \mathbb{R}^{n},\mathbb{R}\right)  $,
where $\left\{  e_{i}\right\}  $ is a c.o.s. of $H$ of eigenvectors of $Q$.
For them we may define $\operatorname{div}_{Q}G\left(  y\right)  ={\rm Tr}\left(
DG\left(  y\right)  \right)  -\left\langle y,Q^{-1}G\left(  y\right)
\right\rangle $.

\begin{Lemma}
\label{l5.4}
For every $p>1$ there is a constant $C_{p}>0$ such that
\[
\int_{H}\left\vert \operatorname{div}_{Q}G\left(  y\right)  \right\vert
^{p}\mu\left(  dy\right)  \leq C_{p}\int_{H}\left(  \left\Vert DG\left(
y\right)  \right\Vert _{L_{2}\left(  H,H\right)  }^{2}+\left\vert
Q^{-1/2}G\left(  y\right)  \right\vert ^{p}\right)  \mu\left(  dy\right)
\]
for every vector field $G$ as above.
\end{Lemma}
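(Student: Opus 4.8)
The plan is to reduce everything to a finite-dimensional Gaussian computation and to recognise $\operatorname{div}_Q G$ as a Gaussian (Skorokhod) divergence, for which $L^p$ bounds with a \emph{dimension-free} constant are available. Since $G$ is finitely based with components $\varphi_i\in C_0^1(\mathbb R^n)$, every quantity depends only on $\pi_n x=(\langle x,e_1\rangle,\dots,\langle x,e_n\rangle)$, so all integrals over $(H,\mu)$ become integrals over $(\mathbb R^n,\gamma_n)$, where $\gamma_n=N(0,\Lambda)$ with $\Lambda=\mathrm{diag}(\lambda_1,\dots,\lambda_n)$ is the image of $\mu$ under $\pi_n$ (as in Lemma \ref{l4.1a}). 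Writing $x_i=\langle x,e_i\rangle$, definition \eqref{e3.1} gives $\operatorname{div}_Q G=\sum_{i=1}^n\big(\frac{\partial\varphi_i}{\partial e_i}-\frac{x_i}{\lambda_i}\varphi_i\big)$. Passing to the standardised variables $\xi_i:=\lambda_i^{-1/2}x_i$ (which are i.i.d.\ $N(0,1)$ under $\mu$) and setting $w_i:=\lambda_i^{-1/2}\varphi_i=\langle Q^{-1/2}G,e_i\rangle$, the chain rule yields $\frac{\partial\varphi_i}{\partial e_i}=\frac{\partial w_i}{\partial\xi_i}$ and $\frac{x_i}{\lambda_i}\varphi_i=\xi_i w_i$, whence
\[
\operatorname{div}_Q G=-\sum_{i=1}^n\Big(\xi_i w_i-\tfrac{\partial w_i}{\partial\xi_i}\Big)=-\delta(W),\qquad W:=Q^{-1/2}G,
\]
where $\delta$ denotes the divergence (adjoint of the gradient) for the standard Gaussian on $\mathbb R^n$. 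Thus the statement is precisely an $L^p$ estimate for the Gaussian divergence operator.

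For $p=2$ I would argue directly, by the Gaussian integration by parts already used in the proof of Proposition \ref{p3.1}, which gives the energy identity
\[
\int_H|\operatorname{div}_Q G|^2\,\mu(dy)=\int_H|Q^{-1/2}G|^2\,\mu(dy)+\int_H\operatorname{Tr}\big((DG)^2\big)\,\mu(dy).
\]
The crucial algebraic point is that, although the Malliavin gradient of $W$ carries the Cameron--Martin weights $\lambda_j/\lambda_i$, these cancel in the \emph{composition} trace: $\sum_{i,j}\frac{\partial w_j}{\partial\xi_i}\frac{\partial w_i}{\partial\xi_j}=\sum_{i,j}\frac{\partial\varphi_j}{\partial e_i}\frac{\partial\varphi_i}{\partial e_j}=\operatorname{Tr}\big((DG)^2\big)$, which is basis free and, by Cauchy--Schwarz, bounded by $\|DG\|_{L_2(H,H)}^2$. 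This yields the assertion for $p=2$ with exactly the Hilbert--Schmidt norm $\|DG\|_{L_2(H,H)}$ appearing in the statement, and explains why the differential term naturally enters to the power $2$.

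For general $p>1$ I would invoke Meyer's inequalities, that is the $L^p(\gamma_n)$-continuity of the Gaussian divergence operator with constant independent of $n$ (see \cite{Bo98} and \cite{Malliavin,Nualart}): $\|\delta(W)\|_{L^p(\gamma_n)}\le C_p\big(\|W\|_{L^p(\gamma_n)}+\|DW\|_{L^p(\gamma_n)}\big)$. Here $\|W\|_{L^p}=\||Q^{-1/2}G|\|_{L^p}$ produces the second term on the right-hand side. For the range $p\in(1,2]$ actually needed in \eqref{e3.23c} one can, alternatively, combine the exact $p=2$ identity above with Hölder's inequality (recall $\mu$ is a probability measure), which keeps the differential contribution at the $L^2$ (composition-trace) level and reproduces the mixed-exponent form of the statement.

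The step I expect to be the main obstacle is the dimension independence of the constant $C_p$: it is exactly this that forces the use of hypercontractivity / Meyer's multiplier theorem rather than a naive integration by parts, since a direct estimate would produce the factor $\langle Q^{-1}x,\cdot\rangle$, which is not controlled in $L^p$ uniformly in $n$. A second delicate point is to reconcile the gradient norm delivered by the abstract theory — which intrinsically carries the weights $\lambda_j/\lambda_i$ — with the $H$-Hilbert--Schmidt norm $\|DG\|_{L_2(H,H)}$ demanded in the conclusion; as shown above this reconciliation is transparent at $p=2$ through the cancellation of the weights in the composition trace, and it is this feature that one must preserve (via the $p=2$ identity plus interpolation) when extending to the required range of exponents. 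Once these points are settled, the bound, being uniform in $n$, passes to the limit and holds for every finitely based $G$, which is all that is needed.
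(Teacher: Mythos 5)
Your overall strategy is the same as the paper's: the paper also standardizes the field, setting $F(\widetilde x)=Q_n^{-1/2}JG(J^{-1}Q_n^{1/2}\widetilde x)$, identifies $\operatorname{div}_QG$ with (minus) the finite--dimensional Gaussian divergence of $F$ under $\gamma_n$, and invokes a dimension--free $L^p$ bound for that divergence (\cite[Proposition 5.8.8]{Bo98}, i.e.\ a Meyer--type inequality). Moreover, your $p=2$ argument is a complete and correct proof of that case: the energy identity plus the observation that the weights $\lambda_j/\lambda_i$ cancel in the composition trace, $\sum_{i,j}\partial_i w_j\,\partial_j w_i=\mbox{\rm Tr}\,[(DG)^2]$, together with $|\mbox{\rm Tr}\,[(DG)^2]|\le\|DG\|^2_{L_2(H,H)}$, gives the lemma for $p=2$ with constant $1$, and it isolates exactly the crux of the matter.

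The gap is the extension to $p\neq 2$, which is the case the paper actually needs (in the proof of the main Claim one takes $s\in(p',2)$, and Lemma \ref{l5.4} is applied in \eqref{e3.23c} with exponent $s<2$). Your ``H\"older from $p=2$'' bridge fails: Jensen for the probability measure $\mu$ does give $\|\operatorname{div}_QG\|_{L^p}\le\|\operatorname{div}_QG\|_{L^2}$, but the $p=2$ identity then leaves $\bigl(\int_H|Q^{-1/2}G|^2d\mu\bigr)^{p/2}$ and $\bigl(\int_H\|DG\|^2_{L_2(H,H)}d\mu\bigr)^{p/2}$ on the right, and neither is dominated by the corresponding term of the statement: for the first, Jensen goes the wrong way ($\int h^pd\mu\le(\int h^2d\mu)^{p/2}$, not the reverse), and for the second, $t^{p/2}\le Ct$ fails for small $t$. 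In fact no reduction to $p=2$ can work, because the claimed mixed--exponent inequality is not scale--invariant: applying it to $cG$ and letting $c\to0$ (if $p<2$) or $c\to\infty$ (if $p>2$) would force $\int_H|\operatorname{div}_QG|^pd\mu\le C_p\int_H|Q^{-1/2}G|^pd\mu$, which is false (take $G=\lambda_1^{1/2}\chi(\langle x,e_1\rangle)\sin(k\langle x,e_1\rangle)e_1$ with a fixed cutoff $\chi$ and $k$ large). So the exponent $2$ on $\|DG\|$ must be read as $p$ — consistently with the form $\mbox{\rm Tr}\,[(DF)^s]$ in which the lemma is used in \eqref{e3.23c} — and the statement is a genuinely $p$--homogeneous Meyer--type estimate that cannot be produced from the $L^2$ identity by H\"older, nor by ``interpolation'' (you have only one endpoint, $p=2$; Meyer fails at $p=1$). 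For that estimate your remaining tool controls $\|\delta(W)\|_{L^p}$ by $\|W\|_{L^p}+\|DW\|_{L^p}$ with $DW=Q^{-1/2}(DG)Q^{1/2}$, whose Hilbert--Schmidt norm carries precisely the weights you flagged, and your proposal never converts it into $\|DG\|_{L_2(H,H)}$; this conversion is the missing step. For what it is worth, the paper's own proof jumps over the same point: it writes $DF(\widetilde x)=J(DG)(J^{-1}Q_n^{1/2}\widetilde x)$, silently dropping the conjugation by $Q_n^{1/2}$, which preserves the trace but not the Hilbert--Schmidt norm. So your diagnosis of where the difficulty sits is accurate, but neither your Meyer route nor your interpolation remark closes it.
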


\begin{proof}
The following result is classical in Malliavin calculus (see for instance
\cite[Proposition 5.8.8]{Bo98}): for every $p>1$ there is a constant $C_{p}>0$
such that for every $n\in\mathbb{N}$, if $\gamma_{n}$ denotes the symmetric
centered Gaussian measure in $\mathbb{R}^{n}$, then
\[\begin{array}{l}
\ds\int_{\mathbb{R}^{n}}\left\vert {\rm Tr}\left(  DF\left(  \widetilde{x}\right)
\right)  -\left\langle \widetilde{x},F\left(  \widetilde{x}\right)
\right\rangle \right\vert ^{p}\gamma_{n}\left(  d\widetilde{x}\right) \\
\\
\ds \leq
C_{p}\int_{\mathbb{R}^{n}}\left(  \left\Vert DF\left(  \widetilde{x}\right)
\right\Vert _{L_{2}\left(  \mathbb{R}^{n},\mathbb{R}^{n}\right)  }
^{2}+\left\vert F\left(  \widetilde{x}\right)  \right\vert _{\mathbb{R}^{n}
}^{p}\right)  \gamma_{n}\left(  d\widetilde{x}\right)
\end{array}
\]
for all smooth compact support vector fields $F:\mathbb{R}^{n}\rightarrow
\mathbb{R}^{n}$.

Set $H_{n}=\pi_{n}\left(  H\right)  $, $\pi_{n}x=\sum_{i=1}^{n}\left\langle
x,e_{i}\right\rangle e_{i}$ and let $J:H_{n}\rightarrow\mathbb{R}^{n}$ be the
natural isomorphism. The operators $Q$, $Q^{1/2}$, $Q^{-1/2}$ work as
operators $H_{n}$, hence they\ define corresponding operators $Q_{n}$,
$Q_{n}^{1/2}$, $Q_{n}^{-1/2}$ in $\mathbb{R}^{n}$.

Given $G$ as above, consider the vector field $F:\mathbb{R}^{n}\rightarrow
\mathbb{R}^{n}$ defined as
\[
F\left(  \widetilde{x}\right)  :=Q_{n}^{-1/2}JG\left(  J^{-1}Q_{n}%
^{1/2}\widetilde{x}\right)  .
\]
With little abuse of notations, it is simply the map $F\left(  x\right)
:=Q^{-1/2}G\left(  Q^{1/2}x\right)  $. We have%
\begin{align*}
DF\left(  \widetilde{x}\right)    & =J\left(  DG\right)  \left(  J^{-1}%
Q_{n}^{1/2}\widetilde{x}\right)  \\
{\mbox Tr}\left(  DF\left(  \widetilde{x}\right)  \right)  -\left\langle
\widetilde{x},F\left(  \widetilde{x}\right)  \right\rangle  & ={\mbox Tr}\left(
J\left(  DG\right)  \left(  J^{-1}Q_{n}^{1/2}\widetilde{x}\right)  \right)
-\left\langle \widetilde{x},Q_{n}^{-1/2}JG\left(  J^{-1}Q_{n}^{1/2}%
\widetilde{x}\right)  \right\rangle \\
& =\operatorname{div}_{Q}G\left(  y\right)  |_{y=J^{-1}Q_{n}^{1/2}%
\widetilde{x}}%
\end{align*}
hence we have%
\begin{align*}
& \int_{\mathbb{R}^{n}}\left\vert \operatorname{div}_{Q}G\left(  y\right)
|_{y=J^{-1}Q_{n}^{1/2}\widetilde{x}}\right\vert ^{p}\gamma_{n}\left(
d\widetilde{x}\right)  \\
& \leq C_{p}\int_{\mathbb{R}^{n}}\left(  \left\Vert J\left(  DG\right)
\left(  J^{-1}Q_{n}^{1/2}\widetilde{x}\right)  \right\Vert _{L_{2}\left(
\mathbb{R}^{n},\mathbb{R}^{n}\right)  }^{2}+\left\vert Q_{n}^{-1/2}JG\left(
J^{-1}Q_{n}^{1/2}\widetilde{x}\right)  \right\vert _{\mathbb{R}^{n}}%
^{p}\right)  \gamma_{n}\left(  d\widetilde{x}\right)  .
\end{align*}
If we denote by $\mu_{n}$ the image measure, on $H_{n}$, of $\gamma_{n}$ under
the transformation $\widetilde{x}\mapsto y=J^{-1}Q_{n}^{1/2}\widetilde{x}$, we
have proved
\[
\int_{H}\left\vert \operatorname{div}_{Q}G\left(  y\right)  |\right\vert
^{p}\mu_{n}\left(  dy\right)  \leq C_{p}\int_{H}\left(  \left\Vert J\left(
DG\right)  \left(  y\right)  \right\Vert _{L_{2}\left(  \mathbb{R}%
^{n},\mathbb{R}^{n}\right)  }^{2}+\left\vert Q_{n}^{-1/2}JG\left(  y\right)
\right\vert _{\mathbb{R}^{n}}^{p}\right)  \mu_{n}\left(  dy\right)  .
\]
It is now easy to realize that this is the claim of the lemma, taking into
account the special form of $G$. The proof is complete.
\end{proof}

     \newpage

\end {document}